\newtheorem{theorem}{Theorem}[section]
\newtheorem{theoremdefinition}{Theorem/Definition}[section]
\newtheorem{examples}[theorem]{Examples}
\newtheorem{prop}[theorem]{Proposition}
\newtheorem{definition}[theorem]{Definition}
\newtheorem{remark}[theorem]{Remark}
\newtheorem{remarks}[theorem]{Remarks}
\newcommand\BX{\mathbf{X}}
\newcommand\PP{\mathbb{P}}
\newcommand\EE{\mathbb{E}}
\newcommand\FF{\mathbf{F}}
\newcommand\CC{\mathbb{C}}
\newcommand\RR{\mathbb{R}}
\newcommand\cA{\mathcal{A}}
\newcommand\cB{\mathcal{B}}
\newcommand\cD{\mathcal{D}}
\newcommand\cM{\mathcal{M}}
\newcommand\cR{\mathcal{R}}
\newcommand\ee{\varepsilon}
\newcommand\SP{\mathcal{P}}
\newcommand\NCP{\mathcal{NC}}
\numberwithin{equation}{section}
\begin{document}

\title{A General Solution to (Free) Deterministic Equivalents}

\author{Carlos Vargas}
\address{Technische Universitaet Graz \\ Steyrergasse 30 III, 8010, Graz, Austria}
\curraddr{}
\email{carlos@math.uni-sb.de}
\thanks{The author was supported by the Deutsche Forschungsgemeinschaft (DFG) through the project SP419/8-1 and by the Austrian Science Foundation (FWF), project P25510-N26}

\subjclass{Primary 46L54, Secondary 60F05}

\keywords{Free Probability, Random Matrices, Deterministic Equivalents}

\date{\today}

\begin{abstract}
We give an algorithm to compute the asymptotics of the eigenvalue distribution of quite general matricial central limit theorems. The central limits are the so called free deterministic equivalents, which in turn are operators whose Cauchy transforms are the solutions to the equations which define very general deterministic equivalents (a la Girko). Our algorithm is based on the one of Belinschi, Mai and Speicher \cite{BMS13} and the possibility to extend it to more general, operator-valued situations (in particular, to Benaych-Georges rectangular spaces \cite{BG09}).
\end{abstract}
\maketitle

\section{Introduction}

In this work we survey on the different techniques from free probability which are used to study describe the asymptotic spectrum of a quite large class of random matrices, including those very recently used to model wireless communications (see Chapter 6 of \cite{CoDe11} or \cite{LXX15}). The theory of free probability \cite{Vo85} is getting more and more robust and it is quite difficult now to survey on all the aspects that make it such an useful tool for understanding the asymptotic distributions of matrix ensembles. 

For this reason, we develop here only those aspects which lead to a quite direct derivation of fixed point equations for computing asymptotic distributions of Hermitian random matrix models.

The general model is described by a (non-commutative) polynomial $P$ evaluated on deterministic matrices and random (Wigner and Haar) matrices. These models where described in \cite{SpVa12} and were shown to correspond to the solutions of the (recently more recurrent) notion of a deterministic equivalent (DE) for the Cauchy-Stieltjes\footnote{For a probability measure, we will use the Cauchy transform $G_{x}(z):=\tau((z-x)^{-1})$, which just the negative of the Stieltjes-transform $\tau((x-z)^{-1})$} transform, which go back to Girko \cite{Gi01}.

Our method is based on properties of Cauchy transforms of operators which can be very neatly described in terms of moments, which in turn are well behaved if we restrict to normal matrices, and in particular, to self-adjoint matrices. We thus require the polynomial to be self-adjoint after being evaluated by a certain tuple of random and deterministic matrices. The tools for the non-selfadjoint case are being developed in \cite{BSS15}. The matrices that we insert in the polynomial, however, are not required to be self-adjoint. This is one of the main directions in which we extend \cite{BMS13}, which was up-to-now, the only algorithm general enough to deal with large classes of polynomials. Historically, earlier works were usually devoted to study a specific polynomial (and sometimes even with specific inputs).

A second direction is that we do not ask the input matrices to have a fixed size. We only require that the polynomial multiplies these matrices in such a way that all the summands are of the same size. In particular, the condition on the sizes of the matrices to be proportionally large as $N\to\infty$ (as it is ussually assumed when approximating by models by their deterministic equivalent) can be very effectively captured using the formalism of \cite{BG09}.

The equations obtained by our method allow to draw the distributions of most of DE's from \cite{CoDe11}.
For some models, such as the Wigner matrices with variance profile, our algorithm fails to be numerically efficient. It is however, theoretically correct and the FDE corresponds to the operator resulting from substituting independent complex gaussians by free circulars, as described already in \cite{Sh96}. We should point to another recent, quite general application of operator-valued free probability, which describes the asymptotics of block-modified random matrices in terms of Choi Matrices \cite{ANV} and relies on the free multiplicative convolution \cite{BSTV14}.

In some very broad sense, our method allows to compute Gaussian distributions of a Central limit theorem, for which Wigner's semicircle law is a very special, but fundamental case. We will concentrate here merely on describing such central limits, and not on the qualitative aspects of this convergence. 

For more qualitative aspects, such as the analysis of fluctuations, or almost sure convergence (in empirical eigenvalue distributions), a much deeper analysis of moments must be performed. The main aspects of this analysis can already be observed in the proof of Wigner's semicircle law. For this reason, we begin our survey with a sketch of Wigner's semicircle law for the Gaussian unitary ensemble. Then we only point out the main ideas behind the different generalizations, and we refer to the works where such deeper analysis is performed (for the Wigner case in \cite{MiSp12} and for the Haar case in \cite{CoSn06}). 

Once that the contribution of the basic components (Haar and Wigner matrices and deterministic matrices) is understood, we can apply our knowledge to a specific polynomial to obtain its asymptotic distribution. Although we do not perform this here, a more detailed study of the specific interactions (mixed moments) between the constituting matrices, affects the convergence to the limit. Hence, the polynomial $P$ plays a huge role in this analysis, as it ``decides'' the rate in which each specific monomial (or cumulant), will appear while computing the moments of $P$. 

For instance, very generally speaking, the self-adjoint polynomials $Z+Z^*$ and $ZZ^*$ have different behaviors when the arguments are replaced by random and deterministic matrices. The moments of the polynomial $ZZ^*$ are all alternating in $Z$, which immediately leads to better convergence properties. The shape $ZZ^*$ is recurrent in the models of \cite{CoDe11} and this explains the better convergence properties that the authors obtain in the original works. 

In particular, a deeper analysis of mixed moments allows to understand the fact that in some of these models one is allowed to replace the deterministic matrices by random matrices with bounded operator norms, or by matrices with some (weaker) tightness condition.

\subsection{Statement of results}
One of most natural ways of constructing a random matrix is to let each entry be an independent copy of a given random variable $X$. The distribution of $X$ induces a probability measure $\PP$ on subsets $E \subseteq M_N(\CC)$ of matrices. Particularly nice is the matrix $Z=Z_N:=(\frac{1}{\sqrt{N}}z_{ij})_{i,j\leq N}$ with independent standard complex Gaussian entries. Such random matrices are called (non-self-adjoint) Gaussian matrices.

From a Gaussian Matrix $Z_N$ there are two immediate ways to a self-adjoint random matrix: The Wigner matrix $X_N=Z_N+Z_N^*$ and the Wishart Matrix $W_N=Z_NZ_N^*$ (where $A^*$ denotes the Hermitian transpose of the matrix $A$).

Wigner started the study of the asymptotic eigenvalue distribution (AED) of random matrices by establishing the convergence of the AED of $X_N$ to the semicircle law \cite{Wi58}. Later, it was shown that such convergence is universal (a kind of Central Limit Theorem), as it holds even if we replace the Gaussian distribution by any other centered distribution (under some mild moment constrains).\footnote{To avoid an overwhelming terminology, we will simply call ``Wigner Matrices'' to both the self-adjoint and the non-self adjoint matrices with centered i.i.d entries and ``Gaussian matrices'' if the entries are Gaussian. We will distinguish the two cases by using $Z$ for the non-self-adjoint matrices and $X$ for the self-adjoint ones.}

Marcenko and Pastur \cite{MaPa67} studied the second case (and several generalizations of it). They considered first the model $W_N=Z_NZ_N^*$, where $Z_N$ is a $N\times n$ random matrix with independent centered complex Gaussian entries with variance $1/n$. If $N/n\to \lambda\in(0,\infty)$, they showed that the AED of $W_N$ converges to the Marchenko-Pastur law $\nu$ which is given by
$$\nu=\left\{\begin{array}{ll}
(1-\lambda)\delta_0 + \tilde \nu, & \text{if } 0\leq \lambda\leq 1,\\
\tilde \nu, & \text{if } 1< \lambda.   
      \end{array}\right.
$$
where $\delta_z$ denotes the Dirac mass at $z\in\CC$ and $\tilde \nu$ is the measure supported on the interval $[(1-\sqrt{\lambda})^2,(1+\sqrt{\lambda})^2]$, with density
$$\mathrm{d}\tilde \nu(t)= \frac{1}{2\pi t}\sqrt{4\lambda-(t-1-\lambda)^2}dt.$$

Equivalently, the measure $\nu$ is characterized by its Cauchy-Stieltjes transform $G_{\nu}$ which solves the equation 
$$G_{\nu}(z)=(z-\frac{1}{1-\lambda G_{\nu}(z)})^{-1}.$$
The Cauchy-Stieltjes transform of a random variable $X$ with distribution $\mu$ is defined as $$G_{\mu}(z):=G_X(z):=\EE((z-X)^{-1}),$$
wherever the inverse of $(z-X)$ exists. If $X$ is supported on the real line, its Cauchy-transform is defined on the whole complex upper half-plane, and its distribution can be recovered from $G_X$ by performing a Stieltjes inversion: $$\mathrm{d}\mu(t)=\lim_{\varepsilon\downarrow0}-\frac{1}{\pi}\Im G_X(t+i\varepsilon).$$

As the complexity of the model grows, explicit expressions for the densities quickly become intractable and one can only hope to find equations which determine the Cauchy Transforms of the distributions. In \cite{MaPa67}, they considered also the model $Z_NT_nZ_N^*$, where $Z_N$ is as before and $T_n$ is a self-adjoint deterministic matrix, such that the eigenvalue distribution $(\mu_{T_n})$ of $T_n$ converges to a given probability measure $\mu$. For this case, the Cauchy transform of the limiting distribution $\nu(\mu)$ satisfies the equation
\begin{equation}\label{DEMP}
G_{\nu(\mu)}(z)=(z-\int_{\RR}\frac{x\mathrm{d}\mu{(x)}}{1-x\lambda G_{\nu(\mu)}(z)})^{-1}
\end{equation}

Note that the equations for $G_{\nu}$ and $G_{\nu(\mu)}$ are both non-random and the second equation depends on $T_n$ only through $\mu$. In order to eliminate randomness from the equations, it is crucial that we let $N\to \infty$ (and hence also $n\to\infty$).

For finite $N$ it is much harder to derive the exact eigenvalue distributions. However, the most recent models for wireless communications involve deterministic matrices with fixed finite sizes (which depend, for example, on the number of receiving and transmitting antennas). 

The heuristics behind the method of deterministic equivalents (DE) is that, if the matrices involved in the model are large enough, one should still be able to use the asymptotic expressions for the Cauchy transforms to obtain an approximation of the desired distribution.

For example, a DE for the finite dimensional model $Z_NT_nZ_N^*$ is obtained by replacing the limiting deterministic data $(\lambda,\mu)$ by the finite data $(Nn^{-1},\mu_{T_n})$ in equation (\ref{DEMP}): 
$$G_{N}(z)=(z-\int_{\RR}\frac{x\mathrm{d}\mu_{T_n}{(x)}}{1-xNn^{-1}G_{N}(z)})^{-1}.$$

The models $P=\sum_{i=1}^k R_iZ_iT_iZ_i^*R_i^*$ and $Q=\sum_{i=1}^k R_iU_iT_iU_i^*R_i^*$, where the $R$'s and the $T$'s are deterministic and the $Z$'s and $U$'s are, respectively, independent Gaussians and Haar(-distributed unitary) matrices\footnote{We will simply refer to these in the future as ``Haar matrices''}, give further generalizations of the Wishart ensemble. 

The main contribution of this survey will be an algorithm to approximate the distributions of very general polynomials on deterministic, Gaussian and Haar matrices of different sizes. The models $P$ and $Q$, are, nevertheless, illustrative enough and they will serve as main examples throughout this work. 

In \cite{CDS11}, it was shown that a DE for the model $P$ is given as the solution of the system of equations:
\begin{equation}\label{CoDeGau}
m_N(z)=\frac{1}{N}\mathrm{Tr}(zI_N-\sum_{j=1}^k \int_{\RR}\frac{x_j\mathrm{d}\mu_{T_j}{(x_j)}}{1-x_jNn_j^{-1} e_j(z)}R_jR_j^*)^{-1} 
\end{equation}
where $n_j$ is the size of $T_j$ and
$$e_i=\frac{1}{N}\mathrm{Tr}R_iR_i^*(zI_N-\sum_{j=1}^k \int_{\RR}\frac{x_j\mathrm{d}\mu_{T_j}{(x_j)}}{1-x_jNn_j^{-1} e_j(z)}R_jR_j^*)^{-1},$$

A similar system of equations was provided for $Q$ in \cite{CHD11}. The method of deterministic equivalents was shown to work for some other matrix models (see Chapter 6 of \cite{CoDe11} for a survey on these). The equations obtained depended in an ad-hoc way on the specific model in question and the models treated are not so diverse (in terms of the non-commutative polynomial on which the model is based). 

In \cite{SpVa12} we proposed a new approach to deterministic equivalents: Instead of considering approximations of the distributions of the matrix models at the level of Cauchy transforms, we approximate the models themselves at the level of operators, inspired by Voiculescu's free probability theory (\cite{Vo91}, see also \cite{BG09}).  

For example, from the matrix models $P$ and $Q$ one can construct the blown-up models $$P_m=\sum_{i=1}^k R_i^{(m)}Z_i^{(m)}T_i^{(m)}(Z_i^{(m)})^*(R_i^{(m)})^*,\quad Q_m=\sum_{i=1}^k R_i^{(m)}Z_i^{(m)}T_i^{(m)}(Z_i^{(m)})^*(R_i^{(m)})^*,$$ where the $Z_i^{(m)}$'s (resp. $U_i^{(m)}$'s) are again independent Wigner matrices (resp. Haar  matrices) and $A^{(m)}:=A\otimes I_m$ for each deterministic matrix $A$, so that sizes of all the involved matrices are scaled by $m$.

The collection $(R_1^{(m)},T_1^{(m)},Z_1^{(m)}, U_1^{(m)},\dots,R_k^{(m)},T_k^{(m)}, Z_k^{(m)},U_k^{(m)})$ of blown-up matrices converges (in joint non-commutative distribution as $m\to \infty$) to a very specific collection of operators $(R_1,T_1,c_1,u_1\dots,R_k,T_k,c_k,u_k)$ defined in terms of operator-valued free probability. In particular, the AED of $P_m$ and $Q_m$ converge, respectively, to the spectral distribution of the operators $P_{\infty}:=\sum_{i=1}^k R_ic_iT_ic_i^*R_i^*$ and $Q_{\infty}:=\sum_{i=1}^k R_iu_iT_iu_i^*R_i^*$ (see Theorem/Definition \ref{rectfreeness} and Fig. \ref{figCHD}).

\begin{figure}\centering \label{figCHD}
\begin{minipage}{0.45\linewidth}\includegraphics[scale=0.18]{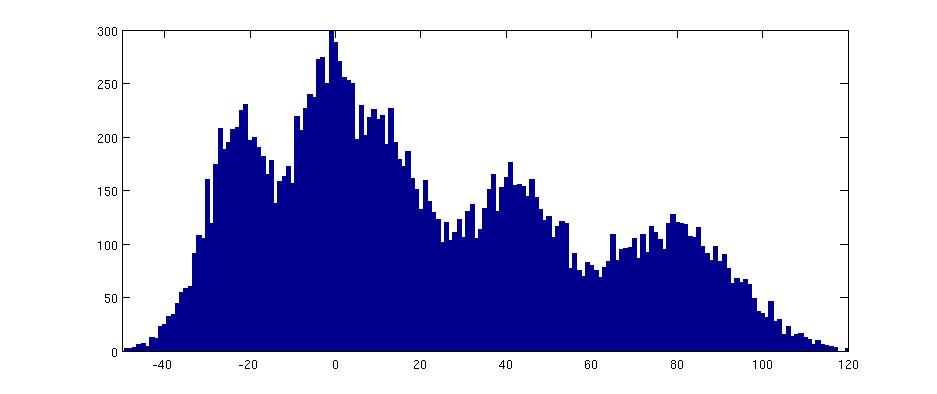}
\end{minipage}
\begin{minipage}{0.45\linewidth}\includegraphics[scale=0.18]{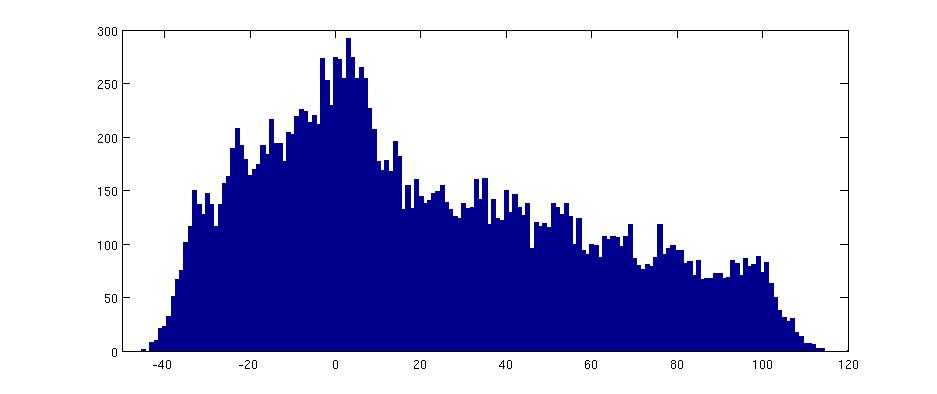} \end{minipage}
\begin{minipage}{0.45\linewidth}\includegraphics[scale=0.18]{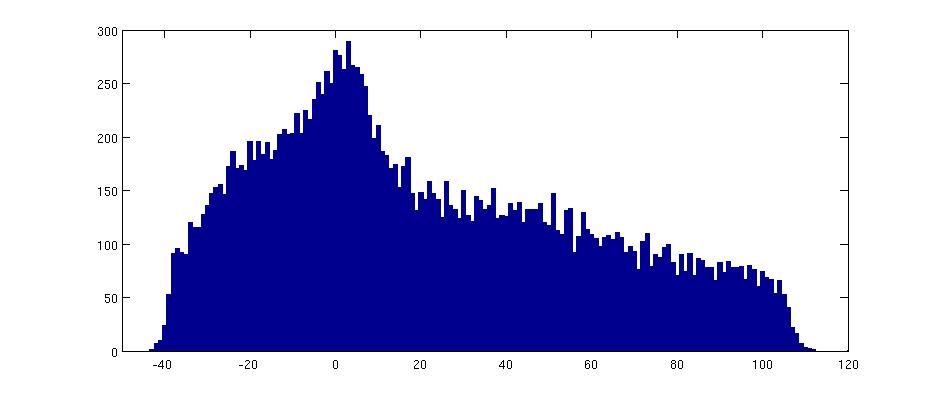} \end{minipage}
\begin{minipage}{0.45\linewidth}\includegraphics[scale=0.18]{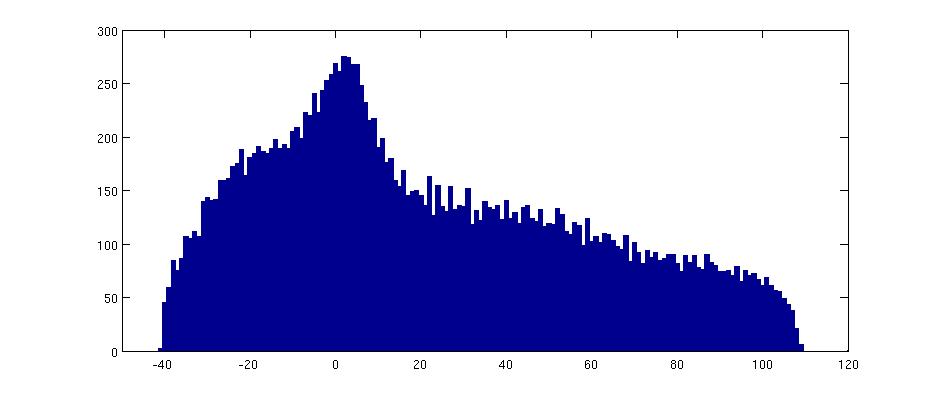}
\end{minipage}
\caption{20000 Eigenvalues of $Q_m$ for $Q=Q_1=R_1U_1T_1U_1^*R_1^*+R_2U_2T_2U_2^*R_2^*$, where $R_1,T_1,R_2,T_2$ are deterministic matrices of sizes $5\times 8$, $5\times 4$, $8\times 8$ and $4\times 4$ and $U_1,U_2$ are (chopped) Haar-Unitaries. Here $m=$ $1$ (up-left), $3$ (up-right), $10$ (down-left), $40$ (down-right).}\label{AEVCou}\end{figure}

In \cite{SpVa12} we then used the combinatorial machinery of free probability (\cite{Sp98}, \cite{NSS02}) to show that the DE (e.g. the solution of the equations (similar to \ref{CoDeGau}) for $Q_N$) is exactly the Cauchy-Transform of the \emph{Free Deterministic Equivalent} $Q_{\infty}$. Moreover, the heuristics behind the method of deterministic equivalents are justified and formalized as a consequence of asymptotic freeness: $Q_m\to Q_{\infty}$ in distribution, but $Q=Q_1$ is already close to the limit if the involved matrices are large.

The same techniques can be applied for $P$ and for the rest of the models in \cite{CoDe11} which are obtained by evaluating a fixed non-commutative polynomial on deterministic, Wigner and Haar matrices. The advantage of our approach is that the definition of the FDE can be easily extended to any arbitrary non-commutative polynomial. New models for wireless communications have been proposed using our formalism \cite{LXX15}. However, up to know, the actual computation of the distribution of $P_{\infty}$ was still performed in an ad-hoc way.

\begin{figure}\label{FigDist}
\centering\includegraphics[scale=0.30]{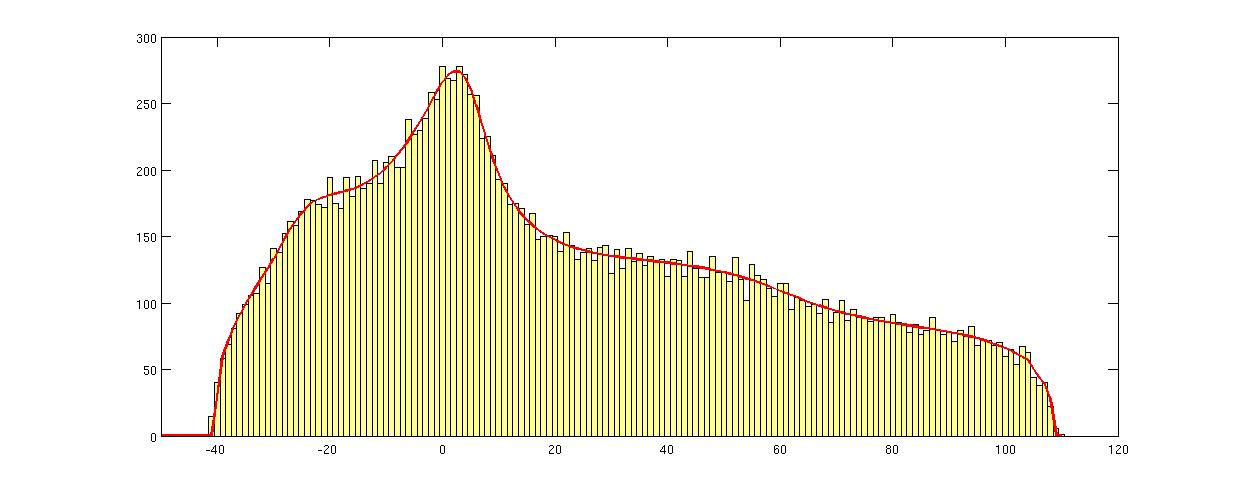} 
\caption{20000 eigenvalues from 100 realizations of $Q_{40}$ (histogram) vs distribution of the free deterministic equivalent (FDE) ${Q_{\infty}}$, computed with our algorithm (solid).}\label{AEDCou2}\end{figure}

In this note, we give a general solution to draw the distribution of a very large class of FDE's (Fig. 2), following the algorithm in \cite{BMS13}, which in turn relies heavily on the analytic subordination phenomena to deal with operator-valued free convolutions (developed in different instances by Biane \cite{Bi98}, Voiculescu \cite{Vo00, Vo02}, Belinschi and Bercovici \cite{BeBe07}), and the linearization trick, which was already suggested by Voiculescu, but became quite prominent after the work of Haagerup and Thorbjornsen (\cite{HaTh05}, see also \cite{An11}). 

Our new algorithm lifts the one in \cite{BMS13} to the operator-valued level and relaxes some unnecessary self-adjointness conditions.

This survey is organized as follows: In Section \ref{SecWigner} we give a sketch of the proof of Wigner's Theorem. Section \ref{SecOVFP} includes all the machinery on Operator-valued free probability which is required to formulate our general algorithm and to establish the correspondence between the known DEs and our FDE's. In Section \ref{SecLinTri} we implement our algorithm to treat several examples (drawn from the literature on wireless communications). 

\subsection*{Acknowledgements}
The author thanks the free probability work group in Saarbrücken for influential discussions (specially to Tobias Mai and Roland Speicher, for discussions concerning the algorithm in \cite{BMS13}).  

\section{Wigner's Theorem}\label{SecWigner}

In this Section we sketch the proof of the simplest version of Wigner's semicircle law, namely, the Gaussian case. This will serve us for future reference when dealing with FDE's (which can be thought as very broad generalizations of this fundamental result).

\subsection{Moments vs cumulants}

Let $X:\Omega\to \RR$ be a random variable in a probability space $(\Omega,\mathcal{F},\PP)$.
The moments of $X$ are the values $$\EE(X^n)=\int_{\Omega}(X(\omega))^n d\PP(\omega).$$
For a large class of random variables, which includes Gaussian random variables and random variables with compact support, the moments of $X$ determine its probability distribution.
For several random variables $X_1,\dots, X_k:\Omega \to \RR$, we may consider the mixed moments
$$\EE(X_1^{n_1}\cdots X_k^{n_k})=\int_{\Omega}(X_1(\omega))^{n_1}\cdots (X_k(\omega))^{n_k} d\PP(\omega).$$

If $X_1,\dots, X_k$ are determined by their moments, then the stochastic independence of $X_1,\dots, X_k$ is equivalent to the fact that, for all $n_1,\dots,n_k\geq 0$, the mixed moments factorize 
\begin{equation}\label{clindep}
\EE(X_1^{n_1}\cdots X_k^{n_k})=\EE(X_1^{n_1})\cdots \EE(X_k^{n_k}). 
\end{equation}
For a collection of random variables $X_1,\dots ,X_k$, we define the (multivariate) classical cumulants $K_n(X_{i_1},\dots X_{i_n}),$ $n\leq 1$, $i_1,\dots ,i_n\leq k$, recursively as the collection of multi-linear functionals $(K_n)_{\geq 1}$ which satisfy the moment-cumulant formula:
$$\EE(X_{i_1}\cdots X_{i_n})=\sum_{\pi\in \SP(n)}K_{\pi}(X_{i_1},\cdots ,X_{i_n}),$$
where:
\begin{definition}\label{partitions}
$\SP(n):=\SP([n])$ are the set partitions; $\SP(n)$ is the power set of $[n]:=\{1,2,\dots,n\}$ and each element $\pi=\{V_1,\dots,V_{|\pi|}\}\in\SP([n])$ decomposes the set $[n]=V_1\cup V_2\cup\dots \cup V_{|\pi|}$ into non-empty, pairwise disjoint subsets (``blocks``) $V_1,\dots,V_{|\pi|}$. We write
\[
K_\pi(X_1,\ldots,X_n):=\prod_{V\in \pi}K_{|V|}(X_V), 
\]
where we use the notation
$$
K_{|V|}(X_V):= K_m(X_{v_1}, \ldots, X_{v_m})
$$  
for each block $V=\{v_1,\ldots,v_m\}\in \pi$, $v_1<\cdots<v_m$. 
\end{definition}
For example, $K_1(X_i)=\EE(X_i)$ is simply the mean and $K_2(X_i,X_j)=\EE(X_iX_j)-\EE(X_i)\EE(X_j)$ is the covariance.

For an ordered tuple $\BX=(X_1,\dots,X_k)$ of random variables, we call $$\Phi_m^{\BX}:=\Phi_m=\{(i_1,\dots,i_m)\mapsto \EE(X_{i_1}\dots X_{i_m}): i_1,\dots,i_m\leq k\}$$ the $m$-th order mixed moments of $(X_1,\dots,X_k)$. Analogously we define $$\Psi_m^{\BX}:=\{(i_1,\dots,i_m)\mapsto K_m(X_{i_1},\dots ,X_{i_m}): i_1,\dots,i_m\leq k\}$$ the $m$-th order cumulants. The collection of moment maps $(\Phi_m^{\BX})_{m\leq n}$ contains exactly the same information as the collection of mixed cumulants $(\Psi_m^{\BX})_{m\leq n}$. However, cumulants seem to organize statistical information in a nicer way. 

A real random variable $X$ is the constant random variable $X=c\in \RR$ iff all cumulants of degree $n\geq 2$ vanish and $\kappa_1(X)=c$. In fact, it is not hard to see that, if we input a constant in any of the arguments of a cumulant of order $k\geq 2$, then the cumulant must vanish, independently from the position of the constant argument and the rest of the arguments.

In terms of cumulants, the simplest (non-constant) random variables are the Gaussian random variables: $X$ has the Normal distribution $\mathcal{N}(\mu,\sigma^2)$ iff all cumulants of degree $n\geq 3$ vanish, $K_2(X,X)=\sigma^2$ and $K_1(X)=\mu$. 

Two random variables $X,Y$ are independent if and only if all the mixed cumulants (i.e. $K_2(X,Y),K_2(Y,X),K_3(X,X,Y),K_3(X,Y,X),K_3(X,Y,Y),\dots$ etc.) vanish. This implies in particular that the cumulants of $X+Y$ are simply $$K_n(X+Y,X+Y,\dots,X+Y)=K_n(X,\dots,X)+K_n(Y,\dots,Y).$$

Hence, cumulants can be used to compute additive convolutions (in fact, the cumulants are related to the coefficients of Fourier transforms).

If $X,Y$ are independent standard Gaussian ($\mathcal {N}(0,1)$) random variables, then $Z:=X+iY$ has the standard complex Gaussian distribution, which can also be characterized in terms of the mixed cumulants of $Z$ and $Z^*:=\bar Z$. The only non-vanishing cumulants are
$$K_2(Z, Z^*)=K_2(Z^*, Z)=1.$$

Wick's formula for independent complex Gaussians $Z_1,\dots,Z_k$, states that, for any $\varepsilon=(\varepsilon_1,\dots, \varepsilon_{n})\in \{1,*\}^n$, we have
\begin{equation}\label{Wick}
\EE(Z_{i_1}^{\varepsilon_1}\dots Z_{i_n}^{\varepsilon_n})=\sum_{\pi\in\SP_2(n)}K_{\pi}(Z_{i_1}^{\varepsilon_1},\dots,Z_{i_n}^{\varepsilon_n}),
\end{equation}
where $\SP_2(n)\subset \SP(n)$ denotes the subset of matchings (i.e. partitions $\pi\in\SP(n)$ such that every $V\in\pi$ has exactly $2$ elements). 
We observe that the Wick formula for this case can be easily derived from the moment-cumulant formula: the restriction to pairings $\SP_2(n)\subset \SP(n)$ follows from the fact that only the second order cumulants of standard complex Gaussians may not vanish. Furthermore, any block $(r,s)$ of $\pi$ should not match independent variables (since the mixed cumulant, and hence $K_\pi$ would vanish).

However, going over cumulants to compute $\EE(Z_{i_1}^{\varepsilon_1}\dots Z_{i_n}^{\varepsilon_n})$ seems more like a detour in this case. We could simply compute $\EE(Z_{i_1}^{\varepsilon_1}\dots Z_{i_n}^{\varepsilon_n})$ by  a direct application of the factorization in eq. (\ref{clindep}). Our Wick formula (\ref{Wick}) will be very useful when we go over to random matrices.

\subsection{Gaussian matrices and Wigner's semicircle law}

Let $Z=Z_N:=(\frac{1}{\sqrt{N}}z_{ij})_{i,j\leq N}$ be a matrix with independent standard complex Gaussian entries $(z_{ij})_{i,j\leq N}$ (the choice of the normalization $\frac{1}{\sqrt{N}}$ will be clear later).

In \cite{Wi58} Wigner described the asymptotic eigenvalue distribution (as $N\to\infty$) of the (necessarily real) eigenvalues of $X_N=(Z_N+Z_N^*)/\sqrt{2}$ (where $A^*$ denotes the Hermitian transpose of $A$).

Since $X(\omega)=(X(\omega))^*$ for any realization of $X$, we can diagonalize $$X(\omega)=U(\omega)D(\omega)(U(\omega))^*,$$ where $D=diag(\lambda_1(\omega),\dots,\lambda_N(\omega))$ and hence for all $k\geq 0$, we have
\begin{eqnarray*}
\frac{1}{N}\mathrm{Tr}(X^k(\omega))=\frac{1}{N}\mathrm{Tr}((U(\omega)D(\omega)(U(\omega))^*)^k)&=&\frac{1}{N}\mathrm{Tr}(U(\omega)D(\omega)^k(U(\omega))^*)\\
&=&\frac{1}{N}\mathrm{Tr}(D(\omega)^k)\\
&=&\frac{1}{N}\sum_{i\leq N}(\lambda_i(\omega))^k.
\end{eqnarray*}
For each $\omega$, expression $\frac{1}{N}\sum_{i\leq N}(\lambda_i(\omega))^k$ can be identified as the $k$-th moment of the real random variable $\Lambda(X(\omega))$, with discrete probability measure $\mu_{X(\omega)}$ which assigns a mass of $1/N$ to each eigenvalue of $X(\omega)$. The averaged eigenvalue distribution (AED) $\mu_{X}$ is the distribution of the random variable $\Lambda(X)$ obtained by averaging all such $\Lambda(X(\omega))$, $\omega\in \Omega$ against $\PP$. More specifically, it is the probability measure $\mu_{X_N}$ with $k$-th moment equal to $\frac{1}{N}\EE(\sum_{i\leq N}\lambda_i^k)$. We want to describe $\mu_{X_N}$ when the size $N\to\infty$. Let us denote $\tau_N:=\frac{1}{N}\mathrm{Tr}$.

Instead of computing the (rather complicated) joint distributions of $(\lambda_1,\lambda_2,\dots, \lambda_N)$, we compute the moments $$\EE\circ \tau_N(X^k(\omega))=\frac{1}{N}\EE(\sum_{i\leq N}\lambda_i^k),$$ which depend on the entries of our matrices in a polynomial way, hoping that we will be able to identify them as moments of a probability measure. 
Indeed, this will be the case, and for this, it will be convenient to work first directly with $Z$ and $Z^*$ and symmetrizing only at the very end. 

In general, we would like to compute, for all $k\geq 1$ and every $\varepsilon=(\varepsilon_1,\dots, \varepsilon_{n})\in \{1,*\}^k$
$$\EE\circ \tau_N(Z^{\varepsilon_1},\dots,Z^{\varepsilon_k}).$$

As an example, let us consider the fourth order mixed moment $\varepsilon=(1,*,*,1)$, we have
\begin{equation}
\EE\circ \tau_N(ZZ^*Z^*Z)=\frac{1}{N^3}\sum_{i_1,\dots,i_4\leq N}\EE(z_{i_1i_2}\bar z_{i_3i_2}\bar z_{i_4i_3}z_{i_4i_1}) 
\end{equation}
By Wick's Formula, 
$$\EE(z_{i_1i_2}\bar z_{i_3i_2}\bar z_{i_4i_3}z_{i_3i_1})=\sum_{\pi\in\SP_2(4)}K_{\pi}(z_{i_1i_2},\bar z_{i_3i_2},\bar z_{i_4i_3},z_{i_4i_1}).$$

There are $3$ pairings of $\{1,2,3,4\}$, namely $\pi_1=\{\{1,2\}\{3,4\}\}$, $\pi_2=\{\{1,3\}\{2,4\}\}$, $\pi_3=\{\{1,4\}\{2,3\}\}$. Since the $z_{ij}$'s are complex Gaussian random variables, $\pi_3$ will vanish (independently of the choice of $i_1,\dots,i_4$) since it will never match a $z_{ij}$ with $\bar z_{ij}$, which is a necessary condition for the cumulant not to vanish. Hence
\begin{eqnarray}
\EE\circ \tau_N(ZZ^*Z^*Z)&=&\frac{1}{N^3}\sum_{i_1,\dots,i_4\leq N}K_{\pi_1}(z_{i_1i_2},\bar z_{i_3i_2},\bar z_{i_4i_3},z_{i_4i_1})\\
&+&\frac{1}{N^3}\sum_{i_1,\dots,i_4\leq N}K_{\pi_2}(z_{i_1i_2},\bar z_{i_3i_2},\bar z_{i_4i_3},z_{i_4i_1})
\end{eqnarray}
Since the entries are independent standard complex Gaussians, each partition imposes restrictions on the indices for the cumulants not to vanish, namely
\begin{eqnarray*}
K_{\pi_1}(z_{i_1i_2},\bar z_{i_3i_2},\bar z_{i_4i_3},z_{i_4i_1})=K_{2}(z_{i_1i_2},\bar z_{i_3i_2})K_{2}(\bar z_{i_4i_3},z_{i_4i_1})&=&\delta_{i_1i_3}\delta_{i_3i_1},\\
K_{\pi_2}(z_{i_1i_2},\bar z_{i_3i_2},\bar z_{i_4i_3},z_{i_4i_1})=K_{2}(z_{i_1i_2},\bar z_{i_4i_3})K_{2}(\bar z_{i_3i_2},z_{i_4i_1})&=&\delta_{i_1i_4}\delta_{i_3i_2}\delta_{i_3i_4}\delta_{i_2i_1}, 
\end{eqnarray*}
Hence we need only to count the number of free indices in order to obtain the contribution of each partition. For this case we obtain
$$\EE\circ \tau_N(ZZ^*Z^*Z)=1+1/N^2.$$

As $N\to\infty$ only the contribution of $\pi_1$ will survive. For a general moment of order $k$, an easy inductive argument shows that a pairing $\pi\in \SP_2(k)$ can only contribute in the limit if $\pi \in \NCP_2(k)\subset \SP_2(k)$ is a non-crossing pairing (i.e. there is no quadruple $1\leq a<b<c<d\leq k$ such that $a,c\in V_i,b,d\in V_j$ where $V_i\neq V_j$ are blocks of $\pi$). In addition, we must have that, for each matching $\{a,b\}\in\pi$, $\varepsilon_a\neq \varepsilon_b$ (in contrast to $\pi_3$ above). 
Hence, for computing the asymptotics of a general moment we need to find
\begin{equation}
\frac{1}{N^{1+k/2}}\sum_{\substack{i_1,\dots,i_k\leq N \\ \pi\in\NCP_2(k)}}K_{\pi}(z_{i_1i_2}^{\varepsilon_1},z_{i_2i_3}^{\varepsilon_2},\dots,z_{i_ki_1}^{\varepsilon_k})=|\NCP_{\ee}(k)|,
\end{equation}
where $\pi\in\NCP_{\ee}(k)\subseteq \NCP_2(k)$ iff $\ee_a\neq\ee_b$ for each pair $\{a,b\}\in\pi$.  

Now, since $X=(Z+Z^*)/\sqrt 2$, we have that 
\begin{eqnarray*}\label{eqcirctosemicirc}
\lim_{N\to\infty}\EE\circ \tau_N(X^k)&=&\frac{1}{N^{1+k/2}2^{k/2}}\sum_{\substack{i_1,\dots,i_k\leq N \\ \varepsilon=(\varepsilon_1,\dots,\varepsilon_k)\in\{1,*\}^k \\ \pi\in\NCP_2(k)}}K_{\pi}(z_{i_1i_2}^{\varepsilon_1},z_{i_2i_3}^{\varepsilon_2},\dots,z_{i_ki_1}^{\varepsilon_k}).
\end{eqnarray*}
If we fix a non-crossing pairing $\pi\in \NCP_2(k)$, there are $2^{k/2}$ non-vanishing choices for $\ee$ (for each block of $(a,b)\in\pi$, we can have $(\varepsilon_a,\varepsilon_b)$ equal to either $(1,*)$ or $(*,1)$). Hence, after summing over all free indices and all $\ee\in\{1,*\}^k$, each non-crossing pairing contributes with $1$ to the moment in the limit. It is well-known that the number of non-crossing pairings $|\NCP_2(k)|$ are counted by the Catalan numbers $C_n:=\frac{1}{n+1}\binom{n}{2n}$, which in turn, are the even moments of the semicircular distribution supported on $[-2,2]$ with density $$\mathrm{d}\mu(t)=\frac{1}{2\pi}\sqrt{4-t^2}.$$ Hence the assertion follows.

In the next section we introduce Voiculescu's free probability theory for non-commutative random variables, which allows to treat random matrices as random variables. One of the key ideas is to think of $\tau_N=\frac{1}{N}\EE\circ \mathrm{Tr}$ as a generalization of the expectation $\EE$ in classical probability.

\section{Operator-Valued Free Probability}\label{SecOVFP}

Voiculescu's free probability \cite{Vo85} is a prominent branch of non-commutative probability, where the classical probabilistic notion of moments (with respect to the expectation) is extended to more general linear functionals $\tau:\cA\to \CC$ on a $C^*$-(or $W^*$-)algebra $\cA$. A crucial step in the development of free probability was to further extend this to cover the classical notion of conditional expectation \cite{Vo95}.

\begin{definition}\label{DefOVPS}
Let $\mathcal{A}$ be a unital $*$-algebra and let $\CC\subseteq\mathcal{B}\subseteq\cA$ be a $*$-sub-algebra. 
A $\mathcal{B}${-probability space} is a pair $\left(\mathcal{A},\mathbf{F}\right)$, where  $\mathbf{F}:\mathcal{A}\to\mathcal{B}$ is a {conditional expectation}, that is, a linear map satisfying:
\begin{eqnarray*}
\mathbf{F}\left(bab'\right) &=& b\mathbf{F}(a)b', \qquad \forall b,b'\in\mathcal{B},a\in\mathcal{A}\\
\mathbf{F}\left(1\right) &=& 1. 
\end{eqnarray*} 
\end{definition}

The elements $a\in\cA$ are called (non-commutative) random variables and the pair $(\cA,\FF)$ is called a $\cB$-valued probability space. The situation $\cB=\CC$ is usually highlighted by writing $\tau$ instead of $\FF$ and the pair $(\cA,\tau)$ is then called a (scalar-valued) non commutative probability space (NCPS). 

The main virtue of Definition \ref{DefOVPS} is that we can treat simultaneously classical random variables and deterministic matrices in the same framework. These spaces are important building blocks for many interesting examples.

\begin{examples}
(1). Complex-valued random variables in a classical probability space $(\Omega,\mathcal{F},\PP)$ can be thought as a NCPS $(\cA,\tau)$ with involution given by complex conjugating the random variables and $\tau:=\EE$ being the usual expectation. Any  sub-sigma algebra $\mathcal{H}\subset\mathcal{F}$ induces a conditional expectation. This means a map $X\mapsto \EE(X|\mathcal{H})$, where for any $E\in\mathcal{H}$, $$\int_E \EE(X|\mathcal{H})\mathrm d \PP=\int_E X\mathrm d \PP.$$ In particular for  the trivial sub-sigma algebra $\mathcal{H}_1=\{\emptyset, \Omega\}$ we recover the usual expectation and for $\mathcal{H}_2=\mathcal F$ we obtain the identity map $\EE(X|\mathcal{H}_2)=X$.

(2) The NCPS $(\cA:=\cM_N(\CC), \tau_N:=\frac{1}{N}\mathrm{Tr})$ of complex $N\times N$ matrices with the normalized trace is a $*$-probability space (with involution given by the Hermitian transpose). 
As we saw in the previous section, if $X\in\cM_N(\CC)$ is Hermitian, we may diagonalize $X=UDU^*$ and realize that $$\tau(X^k)=\tau(UD^kU^*)=\tau(D^k)=\frac{1}{N}\sum_{i=1}^N\lambda_i(X)^k=\int_{\RR}x^k\mu_X(dt),$$ where $\lambda_1(X)\leq\dots\leq\lambda_N(X)$ are the eigenvalues of $X$ and $\mu_X=\frac{1}{N}\sum_{i=1}\delta_{\lambda_i(X)}.$ 

Hence $(\tau(a^k))_{k\geq 1}$ are the moments (in the usual, probabilistic sense) of the averaged eigenvalue distribution $\mu_X$. Such a correspondence between moments and probability distributions (on the complex plane) works not only for Hermitian matrices but for general normal matrices (which in particular include all Hermitian, Skew-Hermitian and Unitary matrices).
\end{examples}

Combining (via the algebraic tensor product) a classical probability space $(\cA,\EE)$ with the NCPS of deterministic matrices $(\cM_N(\CC),\tau_N)$ yields the NCPS $(\cA\otimes \cM_N(\CC),\EE\otimes \tau_N)$, which can be identified with the space of matrices with random entries drawn from $\cA$. In this way, random matrices can be treated as non commutative random variables.

\begin{examples}
Let $(\cA,\tau)$ be a NCPS.

(1). Let $p_1,\dots,p_k\in\cA$ be pairwise orthogonal projections with $1_{\cA}=p_1+\dots+p_k$. There exist a unique conditional expectation $\FF:\cA\to \langle p_1,\dots,p_k\rangle$ compatible with $\tau$ in the sense that $\tau\circ \FF=\tau$, explicitly given by $$\FF(a)=\sum_{i\leq k}\tau(p_i)^{-1}\tau(p_iap_i).$$ 

(2). Consider the algebra $M_N(\cA)\cong M_N(\CC)\otimes\cA$ of $N\times N$ matrices with entries in $\cA$.
The maps 
\begin{eqnarray}
\FF_3:(a_{ij})_{ij}&\mapsto&(\tau(a_{ij}))_{ij}\in M_N(\CC),\\
\FF_2:(a_{ij})_{ij}&\mapsto&(\delta_{ij}\tau(a_{ij}))_{ij}\in D_N(\CC),\\
\FF_1:(a_{ij})_{ij}&\mapsto&\sum_{i=1}^N\frac{1}{N}\tau(a_{ii})I_N\in \CC\cdot I_N
\end{eqnarray}
are respectively, conditional expectations onto the algebras $M_n(\CC)\supset D_n(\CC)\supset \CC\cdot I_N$ of constant matrices, diagonal matrices and multiples of the identity. Note that $(M_N(\CC)\otimes\cA,\FF_1)$ is a scalar-valued NCPS.
\end{examples}

The generality of non-commutative random variables allowed Voiculescu to define a parallel notion to (conditional) independence, which he called \emph{freeness} (\emph{with amalgamation}).

\begin{definition}
Let $(\cA,\FF)$ be a $\cB$-probability space and let $\bar{a}:=a-\FF(a)1_{\cA}$ for any $a\in\cA$. 
The $*$-sub-algebras $\cB\subseteq A_1,\dots ,A_k\subseteq \cA$ are $\cB${-free} (or {free over} $\cB$, or {free with amalgamation over} $\cB$) ({with respect to} $\FF$) iff 
\begin{equation}\label{opfreeness}
\FF(\bar{a_1}\bar{a_2}\cdots \bar{a_m})=0, 
\end{equation}
for all $m\geq 1$ and all tuples $a_1,\dots,a_m\in \cA$ such that $a_i\in A_{j(i)}$ with $j(1)\neq j(2)\neq \dots \neq j(m)$ (note that, for example, $j(1)=j(3)$ is allowed).

Subsets $S_1,\dots ,S_k\subset \cA$ are $\cB${-free} if so are the $*$-sub-algebras $\langle S_1,\cB\rangle,\dots,\langle S_k,\cB\rangle$.
\end{definition}

Free probability is a realm, parallel to classical probability, where the factorization of expectations (Eq. \ref{clindep}) given by classical independence is replaced by freeness. Many fundamental theorems from classical probability, such as the convergence to the Central Limit or the Law of small numbers can be translated to the free setting. 

In particular, by just replacing independence by free independence and working algebraically, the free Central Limit (i.e. the limiting distribution of $S_N=\frac{1}{\sqrt N}(\bar a_1+\dots+\bar a_N)$ for free self-adjoint, identically distributed elements $(a_i)_{i\geq 1}$) turns out to be Wigner's semicircle law, whereas the free analog of the law of small numbers is the Marchenko-Pastur distribution (also known as free Poisson), which is the (also universal) limit of singular-value distributions of Wishart matrices.

In his seminal work \cite{Vo91}, Voiculescu constructed free operators inspired by these pioneering results on the asymptotics of random matrices. In order to state his results on asymptotic freeness, we need to define special non-commutative random variables, and the notion of convergence in non-commutative distribution.

\begin{definition} \label{DefCircHaarU}
Let $(\cA,\tau)$ be a NCPS.

(1). A random variable $c\in\cA$ is a circular element iff, for any $k\geq 1$ and $\varepsilon=(\varepsilon_1,\dots,\varepsilon_k)\in \{1,*\}^k$, the mixed moment on $(c,c^*)$ are
$$\tau(c^{\varepsilon_1}c^{\varepsilon_2}\dots c^{\varepsilon_k})=|\NCP_{\ee}(k)|.$$

(2). A random variable $u\in\cA$ is a Haar-unitary iff $u^*u=1=uu^*$ and $\tau(u^k)=0$ for all $k\geq1$.
\end{definition}
Note that $c$ is not a normal operator, (for example, $\tau(ccc^*c^*)=1\neq 2 = \tau(cc^*cc^*)$ which would be equal if $c^*c=cc^*$). However, by Equation \ref{eqcirctosemicirc} (and the discussion afterwards), it is easy to see that the self-adjoint random variable $s=2^{-1/2}(c+c^*)$ has the standard semicircular distribution.
\begin{definition} \label{convdist}
(1). For an ordered tuple $a=(a_1,\dots,a_k)$ of random variables, we define the distribution of $a$ as the collection $\Phi(a):=(\Phi_m)_{m\geq 1}$ of maps $$\Phi_m:\{(i_1,\dots,i_m)\mapsto \tau(a_{i_1}\dots a_{i_m}): i_1,\dots,i_m\leq k\},$$
If $a=(a_1,\dots,a_k)\in \cA _1^k$ and $b=(b_1,\dots,b_k)\in \cA_2^k$ are tuples of random variables in (possibly different) NCPS $(\cA_1,\tau_1)$, $(\cA_2,\tau_2)$, such that $\tau_1(a_{i_1}\dots a_{i_m}) = \tau_2(b_{i_1}\dots b_{i_m})$ for all $m\geq 1$ and all $1\leq i_1,\dots, i_m\leq k$, we say that $a$ and $b$ have the same distribution and we write $a\sim b$

(2). Let $(\cA_N,\tau_N)$, $N\geq 1$, and $(\cA,\tau)$ be $\CC$-probability spaces and let $(a_1^{(N)},\dots,a_k^{(N)})\in \cA_N^k$, $(a_1,\dots,a_k)\in \cA^k$ be such that $$\lim_{N\to\infty}\tau_N((a_{i_1}^{(N)})\cdots (a_{i_r}^{(N)}))=\tau(a_{i_1}\cdots a_{i_r}),$$ for all $r\geq 1$, $1\leq i_1,\dots, i_m\leq k$ Then we say that $(a_1^{(N)},\dots,a_k^{(N)})$ converges in distribution to $(a_1,\dots,a_k)$ and we write $(a_1^{(N)},\dots,a_k^{(N)})\to(a_1,\dots,a_k)$.
\end{definition}

\subsection{Asymptotic freeness of Random Matrices}

\begin{theorem}\label{afreeness}
For each $N\geq1$, let $Z_1^{(N)},\dots,Z_p^{(N)}$ and $U_1^{(N)},\dots,U_q^{(N)}$ be $N\times N$ independent Wigner and Haar unitary matrices. 
Let $D_1^{(N)},\dots,D_r^{(N)}$ be deterministic matrices, such that, for any $k\geq 1$ and $1\leq j_1,\dots,j_k\leq r$ there exist a constant $c(j_1,\dots,j_k)\in \CC$ such that
\begin{equation}\label{sdetcond}
\lim_{N\to\infty}\frac{1}{N}\mathrm{Tr}((D_{j_1}^{(N)})(D_{j_2}^{(N)})\cdots (D_{j_k}^{(N)}))=c(j_1,\dots,j_k). 
\end{equation}
Then, as $N\to \infty$, $$(X_1^{(N)},\dots,X_p^{(N)},U_1^{(N)},\dots,U_q^{(N)},D_1^{(N)},\dots,D_r^{(N)})\to(s_1,\dots,s_p,u_1,\dots u_q,d_1,\dots d_r)$$ where $s_1,\dots,s_p,u_1,\dots u_q,d_1,\dots d_r$ are elements in some $*$-probability space $(\cA,\tau)$ whose joint-distribution is determined by the following conditions:
\begin{itemize}
 \item $c_i$ is a circular element for all $i\leq p$.
 \item $u_i$ is a Haar-unitary for all $i\leq q$.
 \item $\tau(d_{i_1}d_{i_2}\cdots d_{i_k})=c(i_1,\dots,i_k)$, for any $k\geq 1$, $1\leq i_1,\dots,i_k\leq r$.
\item The algebras $\langle s_1\rangle,\dots,\langle s_p\rangle,\langle u_1,u_1^*\rangle,\dots, \langle u_q,u_q^*\rangle,\langle d_1,\dots, d_r \rangle $ are ($\CC$-)free.
\end{itemize}
\end{theorem}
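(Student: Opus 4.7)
The strategy is to verify joint convergence in non-commutative distribution in the sense of Definition \ref{convdist} by computing every mixed moment. By linearity and cyclicity of the trace, it is enough to evaluate the limits
$$\lim_{N\to\infty}(\EE\otimes\tau_N)\bigl(A_1^{(N)}A_2^{(N)}\cdots A_\ell^{(N)}\bigr),$$
where each factor $A_j^{(N)}$ is one of the random matrices ($X_i^{(N)}$, $U_j^{(N)}$, or $(U_j^{(N)})^*$) or a deterministic $D_k^{(N)}$. I would check that each such limit agrees with $\tau$ applied to the corresponding word in $(s,u,d)$ as determined by the four bullet conditions, which are equivalent to prescribing circular moments in each $s_i$, Haar moments in each $u_j$, the constants $c(j_1,\ldots,j_k)$ in the $d$'s, and the vanishing condition (\ref{opfreeness}) whenever consecutive factors come from different sub-algebras.

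\textbf{Wigner plus deterministic.} I would generalize the argument of Section \ref{SecWigner}: expand each Wigner entry as $\frac{1}{\sqrt{N}}z_{ab}^{(k)}$, apply the Wick formula, and collect the index constraints coming from independence of distinct Gaussians. Every pairing $\pi$ on the Wigner positions contributes a product $\prod_{V\in\pi}\tau_N(\text{word in the }D\text{'s})$ times a power of $N$ counting loops. The same genus argument used in Section \ref{SecWigner} forces this power to be maximal exactly on non-crossing pairings, and the hypothesis (\ref{sdetcond}) then guarantees that each surviving trace converges. A short check identifies the resulting combinatorial sum with the moment that freeness of a family of circulars (each one free from the $d$'s) would predict.

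\textbf{Haar plus deterministic.} The analogue for Haar unitaries rests on the Weingarten formula
$$\EE\bigl(u_{i_1j_1}\overline{u_{k_1\ell_1}}\cdots u_{i_nj_n}\overline{u_{k_n\ell_n}}\bigr)=\sum_{\sigma,\tau\in S_n}\delta_{i,\sigma(k)}\,\delta_{j,\tau(\ell)}\,\mathrm{Wg}_N(\sigma\tau^{-1}),$$
together with the asymptotic $\mathrm{Wg}_N(\pi)=O(N^{-n-|\pi|})$, where $|\pi|$ is the distance from $\pi$ to the identity in the Cayley graph of $S_n$. The same kind of genus count isolates the non-crossing contributions, and their sum is exactly the Haar moment free from the limiting $d$'s; this is the step corresponding to the Collins--\'Sniady analysis referenced in the introduction.

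\textbf{Combined case and main obstacle.} The main work is the monomial that mixes all three ensembles simultaneously. Here the joint independence of the Gaussian entries, the Haar matrices and the $D$'s splits the expectation as a product of a Wick sum (over the Wigner positions) and a Weingarten sum (over the Haar positions), but the two sums share their free indices with the interleaved deterministic factors. I would show, through a joint genus expansion, that a non-vanishing leading contribution requires the Wick pairings to be non-crossing among themselves, the Weingarten permutations to be non-crossing among themselves, and, crucially, not to cross each other across the cyclic word; any crossing costs a strictly negative power of $N$. Identifying the surviving sum with the corresponding moment in the free product of circulars, Haar unitaries and the algebra of the $d$'s, and verifying that alternating centered words vanish, simultaneously establishes the four bullet points and the freeness of the three sub-algebras. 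This last combinatorial identification, rather than either of the two single-ensemble Wick or Weingarten arguments in isolation, is the technical heart of the proof.
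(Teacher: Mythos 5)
Your proposal follows essentially the same route the paper takes: expand mixed moments using the Wick formula for the Wigner entries and the Weingarten formula for the Haar entries, then identify the surviving leading-order contributions via a non-crossing/genus count, with the deterministic matrices contributing traces controlled by hypothesis (\ref{sdetcond}). The paper likewise only sketches each single-ensemble piece and defers the full joint asymptotic analysis to \cite{MiSp12} and \cite{CoSn06}, so your ``combined case'' paragraph is at the same level of detail as the paper's own treatment.
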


Theorem \ref{afreeness} generalizes Wigner's semicircle law in several directions. It allows us to compute the asymptotic mixed moments of $$(X_1^{(N)},\dots,X_p^{(N)},U_1^{(N)},\dots,U_q^{(N)},D_1^{(N)},\dots,D_r^{(N)})$$ by means of the rule of free independence (\ref{opfreeness}), in terms of the individual asymptotic moments of $X_1^{(N)},\dots, X_p^{(N)}, U_1^{(N)},\dots,U_q^{(N)}$ and the (given) asymptotic mixed moments of $(D_1^{(N)},\dots,D_r^{(N)})$. 
A nice way to understand how these mixed moments are calculated is in terms of free cumulants (see Section \ref{combi}).
 
Our combinatorial proof of the Gaussian case in Section \ref{SecWigner} will be our main reference to indicate how the different generalizations work. 

\begin{itemize}
 \item Relaxing Gaussian condition.
\end{itemize}

Let us consider first a single matrix $Z$. Again, we need to study 
\begin{equation}
\frac{1}{N}\EE\circ \mathrm{Tr}(Z^{\varepsilon_1},\dots,Z^{\varepsilon_k})=\frac{1}{N^{1+k/2}}\sum_{\substack{i_1,\dots,i_k\leq N \\ \pi\in\SP(k)}}K_{\pi}(z_{i_1i_2}^{\varepsilon_1},z_{i_2i_3}^{\varepsilon_2},\dots,z_{i_ki_1}^{\varepsilon_k}).
\end{equation}
For the Gaussian case all cumulants of order different than $2$ vanished and hence our sum ran over $\SP_2(k)$. Then we noticed that only $\pi\in \NCP_2(k)$ actually matter in the limit. 

If $Z$ is no longer Gaussian, we need to consider partitions $\pi\in \SP(k)$ such that all blocks of $\pi$ are of size greater or equal to $2$, but $\pi\in \SP(k)$ needs not in principle to be in $\SP_2(k)$. Blocks of size $1$ are still not allowed because the entries of $Z$ are all centered. The important observation is that the number of different cumulants to be considered depends only on the size $k$ of the moment in question (and does not grow with $N$, which, for each $\pi$, only affects the number of choices for each free index). 

If we allow $\pi$ to have bigger blocks, it is intuitive that the contribution of $\pi$ will vanish in the limit since it will imply that more indices need to be identified. This can be shown by induction.

\begin{itemize}
 \item Several Wigner Matrices.
\end{itemize}
Going from one Wigner matrix to several is not hard. We now need to consider the more general expression, for $j_1,\dots,j_k\leq p$
$$\frac{1}{N}\EE\circ \mathrm{Tr}(Z_{j_1}^{\varepsilon_1},\dots,Z_{j_k}^{\varepsilon_k})=\frac{1}{N^{1+k/2}}\sum_{\substack{i_1,\dots,i_k\leq N \\ \pi\in\SP(k)}}K_{\pi}(z_{i_1i_2}^{\varepsilon_1;j_1},z_{i_2i_3}^{\varepsilon_2;j_2},\dots,z_{i_ki_1}^{\varepsilon_k;j_k}),$$
where now the blocks of $\pi\in \SP(n)$ should also respect the labels indicated by $j=(j_1,\dots,j_k)$. Such restriction can be then carried out to the very end. In the limit, the sum will again run over non-crossing pairings $\pi\in \NCP_{\ee}(n)\subset\NCP_2(n)$ with the additional condition that for all $(a,b)\in \pi$, $j_a=j_b$. We will see later in Section \ref{combi} that this characterizes a free family of circular operators.
\begin{itemize}
 \item Wigner matrices and deterministic matrices.
\end{itemize}
 
If we now allow deterministic matrices to operate between our Wigner matrices we need to compute expressions of the form
$$\frac{1}{N}\EE\circ \mathrm{Tr}(D_{j_0}Z_{j_1}^{\varepsilon_1}D_{j_2},\dots,Z_{j_{2k-1}}^{\varepsilon_k}D_{j_{2k}}).$$
More generally, we are interested in estimating
$$\frac{1}{N^{1+k/2}}\sum_{\substack{i_0,\dots,i_{2k+1}\leq N \\ \pi\in\SP(2k+1)}}K_{\pi}(d_{i_0i_1}^{(j_0)},z_{i_1i_2}^{\varepsilon_1},d_{i_2i_3}^{(j_2)},z_{i_3i_4}^{\varepsilon_2},\dots,z_{i_{2k-1}i_{2k}}^{\varepsilon_k},d_{i_{2k}i_{2k+1}}^{(j_{2k})}).$$
Since the $d_{ij}$'s are constants, $\{\{1\},\{3\},\dots,\{2k+1\}\}\subset \pi$, otherwise the cumulant vanishes. Therefore, we need only to consider partitions of $\SP(2,4,\dots, 2k)\cong \SP(k)$.

As an example, let us assume that we only have a single Gaussian matrix and consider the pairing $\{\{1,3\}\{2,7\}\{4,5\}\{6,8\}\}$ and $\ee=(1,*,*,1,*,1,1,*)$. Again, each block corresponds to a cumulant which imposes identifications of some indices: 

$h_0:=i_0,\quad h_1:=i_1=i_6,\quad h_2:=i_2=i_5,\quad h_3:=i_4=i_{13},\quad h_4:=i_3=i_{14},$ 

$h_5:=i_7=i_{10},\quad h_6:=i_{8}=i_{9},\quad h_7:=i_{11}=i_{16},\quad h_8:=i_{12}=i_{15},\quad h_9:=i_{17}$

The contribution of $\pi$ will be $K_{\pi}(z,\bar z, \bar z,z,\bar z,z,z,\bar z)$ (which is one in this case, and does not depend on $N$ in general), times $\mathrm{Tr}(D_{\pi,\ee})$, where $$D_{\pi,\ee}=\sum_{h_0,\dots,h_9\leq N} d^{(j_0)}_{i_0i_1}d^{(j_2)}_{i_2i_3}\dots d^{(j_{2k})}_{i_{2k}i_{2k+1}}.$$ For our example the sum $D_{\pi,\ee}$ is given by:
\begin{eqnarray}
D_{\pi,\ee}&=&\sum_{h_0,\dots,h_9\leq N}
d^{(j_0)}_{i_{0}i_{1}} d^{(j_{2})}_{i_{2}i_{3}} d^{(j_{4})}_{i_{4}i_{5}} d^{(j_{6})}_{i_{6}i_{7}} d^{(j_{8})}_{i_{8}i_{9}} d^{(j_{10})}_{i_{10}i_{11}} d^{(j_{12})}_{i_{12}i_{13}} d^{(j_{14})}_{i_{14}i_{15}} d^{(j_{16})}_{i_{16}i_{17}}\\
&=&\sum_{h_0,\dots,h_9\leq N}
d^{(j_0)}_{h_{0}h_{1}} d^{(j_{2})}_{h_{2}h_{4}} d^{(j_{4})}_{h_{3}h_{2}} d^{(j_{6})}_{h_{1}h_{5}} d^{(j_{8})}_{h_{6}h_{6}} d^{(j_{10})}_{h_{5}h_{7}} d^{(j_{12})}_{h_{8}h_{3}} d^{(j_{14})}_{h_{4}h_{8}} d^{(j_{16})}_{h_{7}h_{9}}\\
&=&\sum_{h_0,\dots,h_9\leq N} 
(d^{(j_0)}_{h_{0}h_{1}} d^{(j_{6})}_{h_{1}h_{5}} d^{(j_{10})}_{h_{5}h_{7}} d^{(j_{16})}_{h_{7}h_{9}}) (d^{(j_{4})}_{h_{3}h_{2}} d^{(j_{2})}_{h_{2}h_{4}} d^{(j_{14})}_{h_{4}h_{8}} d^{(j_{12})}_{h_{8}h_{3}}) (d^{(j_{8})}_{h_{6}h_{6}} )   \\
&=&D_{j_0}D_{j_6}D_{j_{10}}D_{j_{16}}\mathrm{Tr}(D_{j_{4}}D_{j_{2}}D_{j_{14}}D_{j_{12}})\mathrm{Tr}(D_{j_{8}}),
\end{eqnarray}
Hence 
\begin{eqnarray*}
\frac{1}{N^{1+k/2}}\mathrm{Tr}(D_{\pi,\ee})&=&\frac{1}{N^{5}}\mathrm{Tr}(D_{j_0}D_{j_6}D_{j_{10}}D_{j_{16}})\mathrm{Tr}(D_{j_{4}}D_{j_{2}}D_{j_{14}}D_{j_{12}})\mathrm{Tr}(D_{j_{8}})\\
&=&\frac{1}{N^{2}}[\tau_N(D_{j_0}D_{j_6}D_{j_{10}}D_{j_{16}})\tau_N(D_{j_{4}}D_{j_{2}}D_{j_{14}}D_{j_{12}})\tau_N(D_{j_{8}})]\\
&\to&\frac{1}{N^{2}} c(j_0,j_6,j_{10},j_{16})c(j_4,j_2,j_{14},j_{12})c(j_{8})\\
&\to& 0.
\end{eqnarray*}
The Gaussian case here is notably easier than the general Wigner case. Pair partitions are quite convenient because they identify pairs of indices, which allows us to order the $d_{ij}$ in cycles as we did above. Then one can show inductively that only non-crossing pairings matter in the limit, as these produce the maximum number of cycles.

For the general, non-Gaussian case, where cumulants of order greater than $2$ are allowed, more than two indices may be identified and such cyclic reordering of the $d_{ij}$ is not possible in general. In \cite{MiSp12}, the authors associate a graph to each partition and calculate sharp estimates to prove not only that non-crossing pair partitions are the only non-vanishing contribution, but also stronger forms convergence which depend on more delicate analysis of the contribution of these partitions. Alternatively, one may use concentration of measure \cite{AGZ10}.

\begin{itemize}
 \item Haar Matrices
\end{itemize}

The joint distribution of the entries of a Haar-distributed unitary matrix $U_N=(u_{ij})_{i,j\leq N}$ is quite complicated. The entries $u_{ij}$ are known to fulfill the Wick formula $$\EE(u_{i_1j_1}\dots u_{i_qj_q}\bar u_{i_1'j_1'}\dots \bar u_{i_q'j_q'})=\sum_{\rho,\sigma\in S_q}\delta_{i_1i_{\rho(1)}'}\dots \delta_{i_qi_{\rho(q)}'}\delta_{j_1j_{\sigma(1)}'}\dots \delta_{j_qj_{\sigma(q)}'}Wg(N,\rho \sigma^{-1}),$$ where, for each $N$, the Weingarten function $Wg_N:S_q\to \CC$ is some  class function whose values depend on characters of representations of symmetric groups.

The leading term of the Weingarten function can be obtained from the asymptotic expansion
$$Wg(N,\sigma)=N^{-n-|\sigma|}\prod_{i}(-1)^{l(\alpha_i)-1}C_{(l(\alpha_i)-1)}+O(N^{-n-|\sigma|-2}),$$
where the permutation $\sigma=\alpha_1\dots\alpha_k\in S_n$ is a product of cycles $\alpha_1,\dots,\alpha_k$ of lengths $l(\alpha_i)$, $C_k$ is the $k$-th Catalan number and $|\sigma|$ is the minimum number of transpositions required to express $\sigma$.

One may perform an asymptotic analysis, similar to the ones that we did for the Wigner case, to conclude hat the asymptotic mixed moments of the matrices are computed according to the rules described in Theorem \ref{afreeness} (see \cite{Co03}, \cite{CoSn06}).

\begin{remarks}\label{remafree}

(1). Voiculescu's theorem covers Wishart matrices as well: If $p,c$ are free, $c$ is a circular element and $p$ is a projection, a Wishart matrix of parameter $\lambda> 0$ can essentially be viewed asymptotically as a scalar multiple of $cpc^*$ or $pcc^*p$ (depending on whether $\lambda\leq1$ or not).

(2). There exist a stronger version of freeness, called {second-order freeness}, which allows to control fluctuations of random matrices. Second order freeness is achieved (see \cite{AGZ10} or \cite{MiSp12}) if we slightly strengthen the assumptions on our matrices, by either asking all Wigner random matrices to be Gaussian matrices, or by asking all deterministic matrices to be diagonal. Under such conditions, the empirical eigenvalue distribution of any polynomial $$P(X_1^{(N)},\dots,X_p^{(N)},U_1^{(N)},\dots,U_q^{(N)},D_1^{(N)},\dots,D_r^{(N)})$$ converges almost surely to the spectral distribution of $P(s_1,\dots,s_p,u_1,\dots u_q,d_1,\dots d_r)$.
\end{remarks}

Freeness leads quite immediately to matrix-valued freeness: If $A_1,\dots ,A_k$ are free in $(\cA,\tau)$, then the algebras $M_n(\CC)\otimes A_1,\dots,M_n(\CC)\otimes A_k$ of matrices with entries in $A_1,\dots,A_k$ are in general not free over $\CC$ (w.r.t. $\frac{1}{n}\mathrm{Tr}\otimes \tau$). 
They are, however $M_n(\CC)$-free (w.r.t. $id_{M_n(\CC)}\otimes \tau$). 
Below is a slightly more general assertion of this simple but fundamental result.

\begin{prop}\label{matrixfreeness}
Let $(\cA,\FF)$ be a $\cB$-probability space, and consider the $M_n(\cB)$-valued probability space $(M_n(\CC)\otimes\cA,id\otimes\FF)$. 
If $A_1,\dots,A_k\subseteq\cA$ are $\cB$-free, then $(M_n(\CC)\otimes A_1),\dots,(M_n(\CC)\otimes A_k)\subseteq(M_n(\CC)\otimes\cA)$ are $(M_n(\cB))$-free.
\end{prop}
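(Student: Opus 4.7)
The plan is to unfold the definition of $M_n(\cB)$-freeness and reduce it directly to the $\cB$-freeness of the given algebras, by expanding matrices entry-wise. I would take $m \geq 1$ and elements $M^{(l)} \in M_n(\CC) \otimes A_{j(l)}$ with $j(1) \neq j(2) \neq \cdots \neq j(m)$, each $M^{(l)}$ being $M_n(\cB)$-centered in the sense that $(\id \otimes \FF)(M^{(l)}) = 0$. The goal is then to verify $(\id \otimes \FF)(M^{(1)} M^{(2)} \cdots M^{(m)}) = 0$.

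First I would write $M^{(l)} = \sum_{i,j=1}^n E_{ij} \otimes a^{(l)}_{ij}$ with $a^{(l)}_{ij} \in A_{j(l)}$, where $E_{ij}$ are the standard matrix units. Since $(\id \otimes \FF)(M^{(l)}) = \sum_{ij} E_{ij} \otimes \FF(a^{(l)}_{ij})$, the centering hypothesis is equivalent to the entry-wise vanishing $\FF(a^{(l)}_{ij}) = 0$ for all $i,j$ and all $l$. This is the only place where the matrix-valued conditional expectation actively intervenes, and it immediately shifts the problem back to $\cB$-level centering.

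Next I would expand the product by linearity and the matrix-unit rule $E_{ab}E_{cd} = \delta_{bc} E_{ad}$, obtaining
$$M^{(1)} \cdots M^{(m)} = \sum_{i_0, \ldots, i_m} E_{i_0 i_m} \otimes a^{(1)}_{i_0 i_1} a^{(2)}_{i_1 i_2} \cdots a^{(m)}_{i_{m-1} i_m},$$
and then apply $\id \otimes \FF$ termwise. For each fixed tuple $(i_0, \ldots, i_m)$, the scalar entry $\FF\bigl(a^{(1)}_{i_0 i_1}\cdots a^{(m)}_{i_{m-1} i_m}\bigr)$ is the $\FF$-expectation of an alternating product of $\cB$-centered elements drawn from $A_{j(1)}, \ldots, A_{j(m)}$, hence vanishes by the defining relation of $\cB$-freeness. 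Summing over indices gives the required identity.

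I do not anticipate any serious obstacle. The one delicate point is recognizing that $M_n(\cB)$-centering of a matrix is the same as $\cB$-centering of every entry --- an immediate consequence of the linearity of $\FF$ --- and that the alternation pattern $j(1) \neq j(2) \neq \cdots$ is inherited entry-by-entry by the $a^{(l)}_{i_{l-1} i_l}$, regardless of which indices $i_0, \ldots, i_m$ are chosen. After these observations the proof is essentially one matrix expansion plus one direct application of the definition of $\cB$-freeness, with no further analytic input required.
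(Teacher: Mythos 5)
Your proof is correct and is essentially the same as the paper's: both note that $M_n(\cB)$-centering is equivalent to entry-wise $\cB$-centering, then expand the matrix product so that each matrix entry of the image under $\id\otimes\FF$ is a sum of $\FF$-values of alternating products of $\cB$-centered elements, which vanish by $\cB$-freeness. The only cosmetic difference is that the paper starts from arbitrary elements and explicitly centers them, whereas you assume them centered from the outset; the substance of the argument is identical.
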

\begin{proof}
Let $a^{(1)},\dots,a^{(m)}\in M_n(\CC)\otimes\cA$ be such that $a^{(i)}\in M_n(\CC)\otimes A_{j(i)}$ with $j(1)\neq j(2)\neq \dots \neq j(m)$. 
Observe that $$\overline{a^{(i)}}=a^{(i)}-(id\otimes \FF)(a^{(i)})=((a^{(i)}_{rs})-\FF(a^{(i)}_{rs}))_{rs\leq n}=(\overline{a^{(i)}_{rs}})_{rs\leq n}.$$ 
Hence
\begin{equation}
(id\otimes\FF)((\overline{a^{(1)}})\cdots(\overline{a^{(m)}}))=\sum_{i_0,\dots,i_m=1}^n(\FF((\overline{a^{(1)}_{i_0i_1}})(\overline{a^{(2)}_{i_1i_2}})\cdots(\overline{a^{(m)}_{i_{m-1}i_m}})))_{i_0i_m}=0.
\end{equation}
\end{proof}

Now that the main aspects of Voiculescu's asymptotic freeness results have been understood, we use Benaych-Georges generalization to rectangular spaces. For our convenience, the version we present is specialized to the situation where the deterministic matrices are blown-up so that condition \ref{sdetcond} holds trivially. This allows us to directly define free deterministic equivalents.

\begin{theoremdefinition}[\cite{BG09b}/\cite{SpVa12}]\label{rectfreeness}
Let $N,k\geq 1$ be fixed and let $P_1,\dots,P_k$ be pairwise orthogonal projections such that $I_N=P_1+\dots+P_k$.

Let $D_1,\dots,D_r$ be deterministic matrices, with $D_i=P_{h_1(i)}D_iP_{h_2(i)}$ for some $1\leq h_1(i),h_2(i)\leq k$.

For each $m\geq 1$ and $A\in\{P_1,\dots,P_k, D_1,\dots,D_r\}$, let $A^{(m)}:=A\otimes I_m$.
Consider now a space $\cA_{Nm}$ of $Nm\times Nm$ of random matrices and let $Z_1^{(m)},\dots,Z_p^{(m)}$ and $U_1^{(m)},\dots,U_q^{(m)}$ be independent random matrices, such that 

(1). For each $i\leq p$, $X_i^{(m)}=P_{j(i)}^{(m)}X_i^{(m)}P_{j(i)}^{(m)}$ is a non-self-adjoint Wigner Matrix when restricted to the compressed space $P_{j(i)}^{(m)}\cA_{Nm} P_{j(i)}^{(m)}$ for some $1\leq j(i)\leq k$.

(2). For each $i\leq q$, $U_i^{(N)}=P_{h(i)}^{(N)}U_i^{(N)}P_{h(i)}^{(N)}$ is a Haar-unitary random matrix when restricted to the compressed space $P_{h(i)}\cA_{Nm} P_{h(i)} \otimes \cM_m(\CC)$, for some $1\leq h(i)\leq k$. Then
$$(X_1^{(m)},\dots,X_p^{(m)},U_1^{(m)},\dots,U_q^{(m)},D_1^{(m)},\dots,D_r^{(m)})\to(c_1,\dots,c_p,u_1,\dots u_q,D_1,\dots D_r)$$ where $c_1,\dots,c_p,u_1,\dots u_q,D_1,\dots D_r$ are elements in a rectangular probability space $(\cA,\tau)$ (with $(D_1,\dots,D_r,P_1,\dots,P_k)\subset\cM_N(\CC)\subset \cA$) and such that: 

(i). For all $i\leq p$, $c_i=P_{j(i)}c_iP_{j(i)}$ is circular in the compressed space $P_{j(i)} \cA P_{j(i)}$.

(ii). For all $i\leq q$, $u_i=p_{h(i)}u_ip_{h(i)}$ is a Haar-unitary in the compressed space $P_{h(i)}\cA (\CC) P_{h(i)}$.

(iii). The algebras $\langle c_1\rangle,\dots,\langle c_p\rangle,\langle u_1,u_1^*\rangle,\dots, \langle u_q,u_q^*\rangle,\langle D_1,D_1^*\dots, D_r,D_r^* \rangle $ are free with amalgamation over $\langle P_1,\dots P_k\rangle$.

In particular, for any fixed non-commutative polynomial $$Q(z_1,z_1^*\dots,z_p,z_p^*,y_1,y_1^*,\dots,y_q,y_q^*,w_1,\dots,y_r),$$ such that the ensemble $$Q_m:=Q(Z_1^{(m)},(Z_1^{(m)})^*\dots,(Z_p^{(m)})^*,U_1^{(m)},(U_1^{(m)})^*\dots,(U_q^{(m)})^*,D_1^{(m)},\dots,D_r^{(m)})$$ is self-adjoint (i.e. $Q_m=Q_m^*$), then $Q_m$ converges in AED to its Free Deterministic Equivalent $$Q_{\infty}:=Q(c_1,c_1^*\dots,c_p,c_p^*,u_1,u_1^*,\dots,u_q,u_q^*,D_1,\dots,D_r)$$    
\end{theoremdefinition}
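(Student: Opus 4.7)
The plan is to establish the joint convergence by decomposing the problem block-wise along the projections $P_1,\dots,P_k$, applying the square-matrix asymptotic freeness result (Theorem \ref{afreeness}) inside each corner $P_j^{(m)}\cA_{Nm}P_j^{(m)}$, and then lifting the resulting $\CC$-freenesses to $\langle P_1,\dots,P_k\rangle$-amalgamated freeness via Proposition \ref{matrixfreeness} together with the orthogonality of supports across blocks.

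\textbf{Block-wise convergence.} For each $j\leq k$, the corner $P_j^{(m)}\cA_{Nm}P_j^{(m)}$ is isomorphic to $\cM_{N_j m}(\CC)$, where $N_j:=\mathrm{rank}(P_j)$. The random matrices assigned to this corner --- namely those $X_i^{(m)}$ with $j(i)=j$ (Wigner in the corner), those $U_i^{(m)}$ with $h(i)=j$ (Haar-unitary in the corner), and the corner compressions of the deterministic $D_l^{(m)}$ and $P_l^{(m)}$ --- form a tuple satisfying the hypotheses of Theorem \ref{afreeness}; condition (\ref{sdetcond}) is trivial because the deterministic matrices are of the form $A\otimes I_m$ with $A\in\cM_N(\CC)$, so their normalized-trace mixed moments do not depend on $m$. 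Hence, within each corner, the joint distribution converges to a free family of circular elements $(c_i)_{j(i)=j}$, Haar unitaries $(u_i)_{h(i)=j}$, and the fixed deterministic data, establishing (i), (ii), and the $\CC$-free part of (iii) inside each corner.

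\textbf{Assembling the amalgamated-free structure.} To upgrade $\CC$-freeness inside each corner to $\langle P_1,\dots,P_k\rangle$-amalgamated freeness across all corners, view the limit NCPS as a $\langle P_1,\dots,P_k\rangle$-probability space equipped with the conditional expectation $\FF(a):=\sum_j \tau(P_j)^{-1}\tau(P_j a P_j)\,P_j$ (as in the example following Definition \ref{DefOVPS}). Random matrices supported in orthogonal corners $P_j\cA P_j$ and $P_{j'}\cA P_{j'}$ with $j\neq j'$ have vanishing alternating $\FF$-moments for trivial support reasons, so they are automatically $\langle P_1,\dots,P_k\rangle$-free. Combining this cross-corner freeness with the $\CC$-freeness inside each corner, Proposition \ref{matrixfreeness} delivers the amalgamated freeness of the three families $\{c_i\}$, $\{u_i,u_i^*\}$, and $\{D_l,D_l^*\}$ over $\langle P_1,\dots,P_k\rangle$, i.e.\ assertion (iii).

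\textbf{Free Deterministic Equivalent and main obstacle.} For a polynomial $Q$ with $Q_m=Q_m^*$ self-adjoint, the operator $Q_\infty$ is likewise self-adjoint and, being built from bounded operators in a $C^*$-free-probability framework, has compactly supported spectral distribution. Convergence in joint non-commutative distribution of the constituents gives $\lim_m \tau_{Nm}(Q_m^r)=\tau(Q_\infty^r)$ for every $r\geq 1$, and the method of moments upgrades this to weak convergence of the AED $\mu_{Q_m}$ to the spectral measure of $Q_\infty$. The most delicate step is the assembly: producing a single ambient $\langle P_1,\dots,P_k\rangle$-probability space that simultaneously houses all corner limits together with the off-diagonal deterministic matrices (those $D_l$ with $h_1(l)\neq h_2(l)$, which do not sit in any single corner), while preserving every individual distribution and the full amalgamated-free structure. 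Once this bookkeeping is in place, the FDE corollary is a direct consequence of the method of moments.
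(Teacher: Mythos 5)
The paper states Theorem/Definition \ref{rectfreeness} as a known result from \cite{BG09b} and \cite{SpVa12} and does not include a proof of it, so there is no in-paper argument to compare against. Your corner-wise reduction, however, has a genuine gap which you acknowledge only as ``bookkeeping'' at the end, when it is in fact the substance of the theorem. Establishing $\CC$-freeness inside each compressed corner $P_j\cA P_j$ via Theorem \ref{afreeness}, together with the trivial observation that elements supported in orthogonal corners multiply to zero, does not yield $\langle P_1,\dots,P_k\rangle$-amalgamated freeness. The amalgamated condition requires $\FF(\bar a_1\bar b_1\cdots\bar a_m\bar b_m)=0$ for alternating centered words, and such a word can pass through several corners --- exactly when some deterministic matrix $D_l$ is off-diagonal, $h_1(l)\neq h_2(l)$. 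The vanishing of this $\cD$-valued expectation is not a consequence of scalar freenesses in individual corners; the interaction between a random matrix living in one corner, an off-diagonal deterministic matrix carrying it to a second corner, and another random matrix there is precisely what the theorem needs to control. Proposition \ref{matrixfreeness} is also misapplied: it lifts $\cB$-freeness in $\cA$ to $M_n(\cB)$-freeness in $M_n(\cA)$ under a tensoring with $M_n(\CC)$, and says nothing about assembling separate scalar corner results into operator-valued freeness over the diagonal algebra $\cD=\langle P_1,\dots,P_k\rangle$.

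What the proof actually requires is the global $\cD$-valued moment computation, structurally parallel to Section \ref{SecWigner}: expand the $\cD$-valued mixed moments of the blown-up matrices (with $\FF(a)=\sum_j\tau(P_j)^{-1}\tau(P_jaP_j)P_j$) as sums over partitions, carrying the block/projection data through the entire Wick and Weingarten analysis rather than discarding it into corners, and show that as $m\to\infty$ only the non-crossing, $\cD$-balanced contributions survive. This identifies the $\cD$-free cumulant structure which characterizes the claimed amalgamated freeness of the circular elements, Haar unitaries and deterministic operators. The identifications (i)--(iii) and the FDE corollary by the method of moments then follow as you indicate; the missing idea is that the heart of the argument is an operator-valued count over partitions, not a per-corner application of the scalar asymptotic freeness theorem followed by an assembly step.
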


In order to perform numerical computations, it will be convenient to supress unitaries whenever we deal with randomly rotated deterministic matrices.

\subsection{Rotated families of deterministic matrices and elimination of unitaries}\label{rotations}

If sub-algebras $\langle A, B\rangle\subset\cA$ are free from $\langle u, u^*\rangle$, where $u$ is a Haar unitary , and $(a_1,\dots ,a_p)\in A^p$, $(b_1,\dots ,b_q)\in B^q$, then the joint distribution of  $(\tilde a_1,\dots,\tilde a_p,b_1,\dots b_q)$, where $\tilde a_i=ua_iu^*$ is completely determined: 
$\{\tilde a_1,\dots,\tilde a_p\}$ and $\{b_1,\dots b_q\}$ are free and $(\tilde a_1,\dots,\tilde a_p)\sim (a_1,\dots,a_p)$. 
Roughly speaking, conjugating a family of variables by a free Haar-unitary does not alter the distribution of the family and makes it free from a second family of variables.

The fact that $(\tilde a_1,\dots,\tilde a_p)\sim (a_1,\dots,a_p)$ is trivial by the tracial property of $\tau$. 
Hence we only need to show that $\langle ua_1u^*,\dots ,ua_pu^*\rangle$ and $\langle b_1,\dots,b_q\rangle$ are free.

Let $a^{(1)},\dots,a^{(k)}\in \langle a_1,\dots,a_p\rangle$ and $b^{(1)},\dots,b^{(k)}\in \langle b_1,\dots,b_q\rangle$. 
We note that $a^{(j)}\in \langle a_1,\dots,a_p\rangle$ iff $ua^{(j)}u^*\in \langle ua_1u^*,\dots,ua_pu^*\rangle$. 
Since $\tau(ua^{(j)}u^*)=\tau(a^{(j)})$ and $\tau(u)=0=\tau(u^*)$, we have that $\overline{uau^*}=\overline{u}\overline{a^{(j)}}\overline{u^*}$. 

Hence 
\begin{eqnarray*}
\tau((\overline{ua^{(1)}u^*})\overline{b^{(1)}}\cdots (\overline{ua^{(k)}u^*})\overline {b^{(k)}})&=&\tau(\overline{u} \overline{{a^{(1)}}}\overline {u^*}\overline{b^{(1)}}\cdots \overline{u}\overline{a^{(k)}}\overline{u^*}\overline{b^{(k)}}),\\
\tau( \overline{b^{(1)}}(\overline{ua^{(1)}u^*})\cdots \overline{ b^{(k)}}(\overline{ua^{(k)}u^*}))&=&\tau(\overline{b^{(1)}}\overline{u}\overline{a^{(1)}}\overline{u^*}\cdots \overline{u}\overline{a^{(k-1)}}\overline{u^*}\overline{b^{(k)}}),\\
\tau((\overline{ua^{(1)}u^*}) \overline{b^{(1)}}\cdots \overline{b^{(k-1)}}(\overline{ua^{(k)}u^*}))&=&\tau(\overline{u}\overline{a^{(1)}}\overline{u^*}\overline{b^{(1)}}\cdots \overline{b^{(k-1)}}\overline{u}\overline{a^{(k)}}\overline{u^*}),\\
\tau(\overline{ b^{(1)}}(\overline{ua^{(1)}u^*})\cdots (\overline{ua^{(k-1)}u^*})\overline{ b^{(k)}})&=&\tau(\overline{b^{(1)}}\overline{u}\overline{a^{(1)}}\overline{u^*}\cdots \overline{u}\overline{a^{(k-1)}}\overline{u^*}\overline{b^{(k)}}).                                                                                                                              
\end{eqnarray*}
By freeness of $\langle a_1,\dots,a_p,b_1,  \dots,b_q\rangle$ and $\{u,u^*\}$ all the RHS expressions vanish and the freeness of $\langle ua_1u^*,\dots ,ua_pu^*\rangle$ and $\langle b_1,\dots,b_q\rangle$ is established.

The same statement (with the same proof) holds for several randomly rotated collections:

\begin{prop}\label{prop:rot}
Let $A_0,\dots,A_k\subseteq \cA$ be $*$-sub-algebras of a $*$-probability space $(\cA,\tau)$ and let $u_1,\dots,u_k\in\cA$ be Haar-unitary elements, such that $\langle A_0,\dots,A_k\rangle$, $\langle u_1,u_1^*\rangle,\dots,\langle u_k,u_k^*\rangle$ are free. 
For $0\leq j\leq k$, let $(a_1^{(j)},a_2^{(j)},\dots ,a_{p(j)}^{(j)})\in A_j^{p(j)}$. Then 
\begin{eqnarray}
&&(a_1^{(0)},\dots,a_{p(0)}^{(0)},u_1a_1^{(1)}u_1^*,\dots,u_1a_{p(1)}^{(1)}u_1^*,\dots,u_ka_{p(k)}^{(k)}u_k^*)\\ &\sim&(a_1^{(0)},\dots,a_{p(0)}^{(0)},\tilde a_1^{(1)},\dots,\tilde a_{p(1)}^{(1)},\dots,\tilde a_{p(k)}^{(k)}),                                                                                                                                                                                       
\end{eqnarray}
where $\langle a_1^{(0)},\dots,a_{p(0)}^{(0)}\rangle,\langle \tilde a_1^{(1)},\dots,\tilde a_{p(1)}^{(1)}\rangle,\dots,\langle \tilde a_{1}^{(k)},\dots \tilde a_{p(k)}^{(k)}\rangle$ are free and $(\tilde a_{1}^{(j)},\dots \tilde a_{p(j)}^{(j)})\sim(a_{1}^{(j)},\dots a_{p(j)}^{(j)})$, for $j\leq k$.
\end{prop}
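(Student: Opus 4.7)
The plan is to generalize the argument sketched just above the proposition for the case $k=1$ to arbitrary $k$, with essentially the same two ingredients: the tracial property of $\tau$ handles the distributional identity, and a careful alternation check reduces freeness of the rotated collections to the freeness hypothesis on $\langle A_0,\dots,A_k\rangle$ and the $\langle u_j,u_j^*\rangle$.

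First I would dispose of the distributional statement $(\tilde a_1^{(j)},\dots,\tilde a_{p(j)}^{(j)})\sim (a_1^{(j)},\dots,a_{p(j)}^{(j)})$: for any monomial in $\tilde a_r^{(j)}=u_j a_r^{(j)} u_j^*$, all interior pairs $u_j^* u_j$ cancel, and traciality then absorbs the outer $u_j\,\cdots\,u_j^*$, reducing to the corresponding monomial in the $a_r^{(j)}$. Next I would record the key centering identity: since $\tau(u_j)=0$ and $\tau(u_j a u_j^*)=\tau(a)$ by traciality and $u_j^* u_j=1$, for any $a\in A_j$ one has
$$\overline{u_j a u_j^*}=u_j\bar a u_j^*=\bar u_j\,\bar a\,\bar u_j^*.$$

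Now given a centered alternating product $\overline{x^{(1)}}\overline{x^{(2)}}\cdots\overline{x^{(m)}}$, with $x^{(i)}$ drawn from the $j_i$-th (possibly rotated) collection and $j_i\neq j_{i+1}$, I would substitute each $\overline{x^{(i)}}$ using the centering identity above (or, when $j_i=0$, leave it as $\overline{a^{(i)}}\in A_0$). This turns $\tau(\overline{x^{(1)}}\cdots\overline{x^{(m)}})$ into $\tau$ of a longer product whose factors alternately live in $\langle A_0,\dots,A_k\rangle$ and in the unitary algebras $\langle u_{j_i},u_{j_i}^*\rangle$. The alternation needs to be verified in two places: inside a single rotated block $\bar u_{j_i}\,\overline{a^{(i)}}\,\bar u_{j_i}^*$ one alternates between $\langle u_{j_i},u_{j_i}^*\rangle$ and $\langle A_0,\dots,A_k\rangle$ automatically, and across the junction between consecutive blocks the condition $j_i\neq j_{i+1}$ places the adjacent factors either in two distinct unitary algebras $\langle u_{j_i},u_{j_i}^*\rangle\neq\langle u_{j_{i+1}},u_{j_{i+1}}^*\rangle$ or in a unitary algebra and $\langle A_0,\dots,A_k\rangle$.

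Once alternation is verified, the freeness of $\langle A_0,\dots,A_k\rangle,\langle u_1,u_1^*\rangle,\dots,\langle u_k,u_k^*\rangle$ forces the resulting trace to vanish, which is exactly the defining condition for freeness of the collections $\langle a_1^{(0)},\dots,a_{p(0)}^{(0)}\rangle,\langle\tilde a_1^{(1)},\dots,\tilde a_{p(1)}^{(1)}\rangle,\dots,\langle\tilde a_1^{(k)},\dots,\tilde a_{p(k)}^{(k)}\rangle$. The main obstacle is really just the bookkeeping at the block junctions: one has to handle carefully the cases where a $j_i=0$ block meets a rotated block, and where two rotated blocks with different indices meet, so that the substituted expression is genuinely alternating in the algebras assumed free. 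Everything else reduces to the one-rotation case already spelled out in the text.
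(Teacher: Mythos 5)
Your proof is correct and follows essentially the same route as the paper, which handles $k=1$ explicitly (using $\tau(u)=0$ to write $\overline{uau^*}=\overline{u}\,\overline{a}\,\overline{u^*}$, expanding the alternating product, and invoking freeness of $\langle A_0,\dots,A_k\rangle$ with the $\langle u_j,u_j^*\rangle$'s) and then simply states that the same argument extends to arbitrary $k$. Your explicit bookkeeping of the three junction cases ($j_i,j_{i+1}$ both nonzero; $j_i=0$; $j_{i+1}=0$) is precisely the alternation check the paper leaves implicit, so this is the intended generalization rather than a different proof.
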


By the previous proposition we may, for example, reduce the problem of investigating the distribution of $a+ubu^*$ to that of the sum of free copies $\tilde a + \tilde b$.

\begin{figure}
 \centering
\setlength{\unitlength}{.4mm}
\begin{picture}(120,120)
\put(0,0){\line(0,1){120}}
\put(25,0){\line(0,1){120}}
\put(120,0){\line(0,1){120}}
\put(60,95){\line(0,1){25}}
\put(60,60){\line(0,1){35}}
\put(75,0){\line(0,1){45}}
\put(75,95){\line(0,1){25}}
\put(5,105){$P_0$}
\put(29,83){$ T_1, P_1$}
\put(29,67){$ u_1, u_1^*$}
\put(35,105){$ R_1$}
\put(92,105){$ R_k$}
\put(5,75){$ R_1^*$}
\put(5,20){$  R_k^*$}
\put(62,48){$\ddots$}
\put(10,48){$\vdots$}
\put(62,105){$\dots$}
\put(83,28){$ T_k, P_k$}
\put(83,12){$ u_k,  u_k^*$}
\put(0,0){\line(1,0){120}}
\put(0,95){\line(1,0){120}}
\put(0,120){\line(1,0){120}}
\put(25,60){\line(1,0){35}}
\put(0,60){\line(1,0){25}}
\put(75,45){\line(1,0){45}}
\put(0,45){\line(1,0){25}}
\end{picture}
\caption{\text{Embedding of the operators forming $Q=Q_{\infty}$ on a Rectangular Space}}
    \label{fig:rectspace}
\end{figure}

For our example $Q_{\infty}=\sum_{i=1}^k R_iu_iT_iu_i^*R_i^*,$ it is convenient to think that the operators forming $Q_{\infty}$ (as well as the matrices forming $Q_m$), are embedded in a rectangular probability space $\cA$, as illustrated in Fig. \ref{fig:rectspace} (where $\tilde N:=N+n_1+\dots+n_k$ and $P_0,\dots,P_k\in M_{\tilde N}\subset \cA$ are such that $Tr(P_0)=N$ and $Tr(P_i)=n_i$). Let $R=\sum_{i=1}^k R_i$, $U=\sum_{i=1}^k u_i$ and $T=\sum_{i=1}^k T_i$, then we have that the desired distribution (of $Q_{\infty}$) is just that of the element $RUTU^*R^*$ in the compressed space $P_0 \cA P_0$. Just as in the scalar case, where the unitaries conjugating a variable may be removed, provided that the conjugated variable becomes free from the rest, the element  $RUTU^*R^*$ can be thought as $\tilde R \tilde T \tilde R$, where $\tilde R, \tilde T$ are free over $\langle P_0,\dots,P_k\rangle$.

\begin{prop}\label{recrot}
Let $(\cA,\FF)$ be a $\cD:=\langle p_1,\dots,p_k\rangle$rectangular probability space. Let $A_1,A_2\subset \cA$ and $U=u_1+u_2+\dots+u_k\in \cA$ be such that $\langle D_1,D_2\rangle,U$ are $\cD$-free and $u_j=p_ju_jp_j$ is a Haar unitary in the compressed space $p_j\cA p_j$. Then $D_1,UD_2U^*$ are $\cD$-free.
\end{prop}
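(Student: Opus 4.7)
The plan is to adapt the scalar rotation argument (Proposition \ref{prop:rot}) to the $\cD$-valued rectangular setting. The first preparatory observation is that $U$ commutes with every element of $\cD$: since each $u_j = p_j u_j p_j$ and the $p_j$ are pairwise orthogonal projections summing to $1$, one computes $p_i U = p_i(\sum_j u_j) = u_i = U p_i$, so $U$ commutes with every $p_i$ and hence with $\cD$. The second preparatory fact is that $\FF(U) = \FF(U^*) = 0$, because in each compressed rectangular space $p_j \cA p_j$ the element $u_j$ is a Haar unitary and hence has vanishing conditional expectation.

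Using commutation with $\cD$ together with $U^*U = UU^* = 1$, every element of $\langle UA_2U^*, \cD\rangle$ can be rewritten as $U\tilde b U^*$ for some $\tilde b \in \langle A_2, \cD\rangle$: the factors of $\cD$ appearing in a monomial $d_0(Ua_1U^*)d_1(Ua_2U^*)\cdots$ slide across $U$ and $U^*$, and the resulting $U^*U$'s collapse to $1$. Moreover, $\FF(U\tilde b U^*) = \FF(\tilde b)$: splitting $\tilde b = (\tilde b - \FF(\tilde b)) + \FF(\tilde b)$, the centered part gives $\FF(U(\tilde b - \FF(\tilde b))U^*) = 0$ because $U,\, \tilde b - \FF(\tilde b),\, U^*$ is an alternating triple of $\FF$-centered elements in $\langle U,\cD\rangle$ and $\langle A_1, A_2, \cD\rangle$ (which are $\cD$-free by hypothesis), while $\FF(U \FF(\tilde b) U^*) = \FF(\tilde b)\FF(UU^*) = \FF(\tilde b)$ since $\FF(\tilde b) \in \cD$ commutes with $U$.

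With these preparations, the $\cD$-freeness of $A_1$ and $UA_2U^*$ becomes a direct check. Consider any $\cD$-alternating product of $\FF$-centered elements of $\langle A_1, \cD\rangle$ and $\langle UA_2U^*, \cD\rangle$, which after the rewriting above takes the form
$$a_1 \cdot U \cdot \tilde b_1 \cdot U^* \cdot a_2 \cdot U \cdot \tilde b_2 \cdot U^* \cdots$$
with each $a_i \in \langle A_1, \cD\rangle$ centered and each $\tilde b_i \in \langle A_2, \cD\rangle$ centered (the latter by the equality $\FF(U\tilde b_i U^*) = \FF(\tilde b_i)$ established above). Reading this product factor by factor, it is an alternating product between the two algebras $\langle A_1, A_2, \cD\rangle$ (holding the $a_i$ and $\tilde b_i$) and $\langle U, \cD\rangle$ (holding the $U$ and $U^*$), with every factor $\FF$-centered. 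The hypothesized $\cD$-freeness of $\langle A_1, A_2\rangle$ from $\{U\}$ then forces $\FF$ of the whole product to vanish.

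I do not anticipate any serious obstacle: the proof is essentially a transcription of the scalar argument behind Proposition \ref{prop:rot}, the only points specific to the rectangular setting being the commutation $U \cdot \cD = \cD \cdot U$ and the identity $\FF(U) = 0$, both of which follow immediately from $u_j = p_j u_j p_j$ being a Haar unitary inside $p_j \cA p_j$.
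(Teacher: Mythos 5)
Your proof is correct, and it is in the spirit of the paper's one-line proof (``Just replace $\tau$ by $\FF$ in Prop.\ \ref{prop:rot}''), but it actually does the work that the one-liner glosses over. The scalar proof of Prop.\ \ref{prop:rot} crucially uses the traciality identity $\tau(u a u^*) = \tau(a)$, which has no direct analogue for a general conditional expectation $\FF$; your three preparatory observations --- that $U$ commutes with $\cD$, that $\FF(U) = \FF(U^*) = 0$, and that $\FF(U\tilde b U^*) = \FF(\tilde b)$ --- are precisely the facts needed to make ``replace $\tau$ by $\FF$'' legitimate, and you justify them all. One small remark on the third fact: you derive $\FF(U\tilde b U^*) = \FF(\tilde b)$ from the $\cD$-freeness hypothesis (via the centered triple $U,\ \tilde b - \FF(\tilde b),\ U^*$), whereas one could also obtain it directly from the traciality of the underlying scalar trace $\tau$, since in a rectangular space $\FF(a) = \sum_i \tau(p_i)^{-1}\tau(p_i a p_i) p_i$ and $\tau(p_i U \tilde b U^* p_i) = \tau(U p_i \tilde b p_i U^*) = \tau(p_i \tilde b p_i)$ using $Up_i = p_i U$ and $U^*U = 1$; the latter route is closer to what ``replace $\tau$ by $\FF$'' literally suggests and does not consume the freeness hypothesis, but both are valid. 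Your final factor-by-factor alternation argument correctly covers all the start/end patterns that the scalar proof enumerates as four separate cases.
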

\begin{proof}
Just replace $\tau$ by $\FF$ in Prop. \ref{prop:rot}.
\end{proof}

In \cite{SpVa12} we showed that the Cauchy Transforms of our FDE's satisfy the Equations in \cite{CoDe11}. In order to re-derive their equations, we needed to understand how the mixed moments of free random variables are calculated. In the next section we list the main tools for showing this correspondence.

\subsection{Combinatorics and Cumulants}\label{combi}

Recall Definition \ref{partitions}. A partition $\pi\in\SP(n)$ is \emph{non-crossing} if there is no
quadruple of elements $1\leq i<j<k<l\leq n$ such that $i\sim_\pi k$, $j\sim_\pi l$ and
$i\not\sim_\pi j$. The non-crossing partitions of order $n$ form a
sub-poset of $\SP(n)$ which we denote by $\NCP(n)$.

For $n\in\mathbb{N}$, a $\mathbb{C}$-multi-linear map $f:\mathcal{A}^{n}\to\mathcal{B}$ is called $\mathcal{B}$\textit{-balanced} if it satisfies the $\mathcal{B}$-bilinearity conditions, that for all $b,b'\in\mathcal{B}$, $a_{1},\dots,a_{n}\in\mathcal{A}$, and for all $r=1,\dots,n-1$
\begin{eqnarray*}
f\left(ba_{1},\dots,a_{n}b'\right) &=& bf\left(a_{1},\dots,a_{n}\right)b'\\
f\left(a_{1},\dots,a_{r}b,a_{r+1},\dots,a_{n}\right) &=& f\left(a_{1},\dots,a_{r},ba_{r+1}\dots,a_{n}\right)
\end{eqnarray*}

A collection of $\mathcal{B}$-balanced maps $\left(f_{\pi}\right)_{\pi\in \NCP}$ is said to be {multiplicative} with respect to the lattice of non-crossing partitions if, for every $\pi\in \NCP$, $f_{\pi}$ is computed using the block structure of $\pi$ in the following way:

\begin{itemize}
 \item If $\pi=\hat{1}_{n}\in \NCP\left(n\right)$, we just write $f_{n}:=f_{\pi}$.

 \item If $\hat{1}_{n}\neq\pi=\left\{ V_{1},\dots,V_{k}\right\} \in \NCP\left(n\right),$ then by a known characterization of $\NCP$, there exists a block $V_{r}=\left\{ s+1,\dots,s+l\right\} $ containing consecutive elements. For any such a block we must have
\begin{equation*}
f_{\pi}\left(a_{1},\dots,a_{n}\right)=f_{\pi\backslash V_{r}}\left(a_{1},\dots,a_{s}f_{l}\left(a_{s+1},\dots,a_{s+l}\right),a_{s+l+1},\dots,a_{n}\right),
\end{equation*}
where $\pi\backslash V_{r}\in \NCP\left(n-l\right)$ is the partition obtained from removing the block $V_{r}$.

\end{itemize}

The {operator-valued free cumulants} $\left(R^{\cB}_{\pi}\right)_{\pi\in \NCP}$ are defined as the unique multiplicative family of $\mathcal{B}$-balanced maps satisfying the (operator-valued) moment-cumulant formulas
\begin{equation*}
\EE\left(a_{1}\dots a_{n}\right)=\sum_{\pi\in \NCP\left(n\right)}R^{\cB}_{\pi}\left(a_{1},\dots,a_{n}\right)
\end{equation*}

By the {cumulants of a tuple} $(a_{1},\dots,a_{k})\in\mathcal{A}^k$, we mean the collection of all cumulant maps
\begin{equation*}
\begin{array}{cccc}
R_{i_{1},\dots,i_{n}}^{\mathcal{B};a_{1},\dots,a_{k}}: & \mathcal{B}^{n-1} & \to & \mathcal{B},\\
 & \left(b_{1},\dots,b_{n-1}\right) & \mapsto & R^{\cB}_{n}\left(a_{i_{1}},b_{1}a_{i_2},\dots,b_{i_{n-1}}a_{i_n}\right)\end{array}
\end{equation*}
for $n\in\mathbb{N}$, $1\leq i_{1},\dots,i_{n}\leq k$.

A cumulant map $R_{i_{1},\dots,i_{n}}^{\mathcal{B};a_{1},\dots,a_{k}}$ is {mixed} if there exists $r<n$ such that $i_{r}\ne i_{r+1}$. The main feature of the operator-valued cumulants is that they characterize freeness with amalgamation:

\begin{theorem}[\cite{Sp98}]
The random variables $a_{1},\dots,a_{n}$ are $\mathcal{B}$-free iff all their mixed cumulants vanish.
\end{theorem}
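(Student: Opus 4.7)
The plan is to establish both implications through the moment-cumulant formula and the multiplicativity of $R^{\cB}_\pi$ with respect to $\NCP$, exploiting the uniqueness of the cumulants as the solution to that recursion.

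For the ``if'' direction, assume that every mixed cumulant vanishes, and take an alternating centered product $\bar{a_1}\cdots\bar{a_m}$ with $\bar{a_i}\in A_{j(i)}$ and $j(1)\neq j(2)\neq\cdots\neq j(m)$. By the moment-cumulant formula, $\FF(\bar{a_1}\cdots\bar{a_m})=\sum_{\pi\in\NCP(m)} R^{\cB}_\pi(\bar{a_1},\ldots,\bar{a_m})$. For each $\pi$, two cases arise. If $\pi$ contains a singleton $\{i\}$, then the corresponding factor in $R^{\cB}_\pi$ is $R_1(\bar{a_i})=\FF(\bar{a_i})=0$, so by $\mathcal{B}$-bilinearity the whole term vanishes. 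Otherwise all blocks of $\pi$ have size $\geq 2$; by the standard structural property of non-crossing partitions, there is an innermost block $V=\{s+1,\ldots,s+l\}$ of consecutive elements. Because the labels are alternating, these indices belong to distinct algebras and so $R_l(\bar{a_{s+1}},\ldots,\bar{a_{s+l}})$ is a mixed cumulant, which vanishes by hypothesis; multiplicativity then forces $R^{\cB}_\pi=0$. Summing over $\pi$ gives $\FF(\bar{a_1}\cdots\bar{a_m})=0$, which is precisely $\mathcal{B}$-freeness.

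For the ``only if'' direction, assume $A_1,\ldots,A_n$ are $\cB$-free. I would argue by a uniqueness principle. Define truncated cumulants $\tilde R_n^{\cB;a_1,\ldots,a_k}$ that agree with $R_n$ when all arguments are drawn from a single $A_j$ and are declared to vanish on all properly mixed inputs; extend multiplicatively to $\tilde R_\pi$. Since the genuine operator-valued cumulants are the \emph{unique} multiplicative family of $\cB$-balanced maps satisfying $\FF(a_1\cdots a_n)=\sum_\pi R^{\cB}_\pi(a_1,\ldots,a_n)$, it is enough to show that $\tilde R$ satisfies the same moment-cumulant identity on \emph{every} tuple; once done, $\tilde R=R^{\cB}$ and the mixed cumulants vanish by construction. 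The identity is tautological on pure tuples, so the substantive case is a tuple with some block structure of algebra labels. Group maximal runs of consecutive entries in the same algebra into single elements $A^{(1)},\ldots,A^{(r)}\in A_{h(1)},\ldots,A_{h(r)}$ with $h(1)\neq\cdots\neq h(r)$; using $\cB$-bilinearity on both sides, the identity reduces to the alternating case, which is exactly the freeness relation read backwards via moment-cumulant.

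The main obstacle is the reduction in the second direction: one must check carefully that the $\cB$-bilinearity of $R_n$ and its multiplicativity in $\NCP$ are fully compatible with the ``merge consecutive equal-algebra letters'' operation, so that partitions which cross run-boundaries on one side recombine correctly on the other. This compatibility, essentially a refinement lemma in $\NCP$, is what makes the truncated family $\tilde R$ actually multiplicative and $\cB$-balanced in the required sense. Once verified, the equivalence follows by uniqueness, and the theorem is complete.
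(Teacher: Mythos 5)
The paper cites this theorem to [Sp98] and gives no proof, so there is no in-paper argument to compare against; I evaluate your sketch on its own.

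Your skeleton is the standard one, but one genuine gap recurs in both directions: you never invoke the formula expressing free cumulants with \emph{product} entries in terms of cumulants of the individual factors (Krawczyk--Speicher in the scalar case; the operator-valued analogue is part of [Sp98], and the paper itself alludes to it just below the theorem as ``formulas for computing cumulants of products''). In the ``if'' direction you only verify $\FF(\bar a_1\cdots\bar a_m)=0$ when each centered factor is a single named generator. By the paper's definitions, $\cB$-freeness of $a_1,\dots,a_n$ means freeness of the $*$-subalgebras $\langle a_i,\cB\rangle$, so you must kill $\FF(\bar c_1\cdots\bar c_m)$ for arbitrary $\cB$-polynomials $c_i$ in $a_{j(i)}$; propagating the hypothesis, which concerns only the cumulant maps $R^{\cB;a_1,\dots,a_n}_{i_1,\dots,i_m}$ of the named variables, to the cumulants of such products requires exactly the products formula. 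You also silently replace $R_l(\bar a_{s+1},\dots,\bar a_{s+l})$ by $R_l(a_{s+1},\dots,a_{s+l})$; this uses the (true but unstated) fact that $R_l$ vanishes for $l\geq 2$ whenever one entry lies in $\cB$.

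In the ``only if'' direction the uniqueness strategy is a legitimate alternative to the usual induction, and you are right that the refinement step is the crux, but ``$\cB$-bilinearity on both sides'' is not the tool that closes it. $\cB$-bilinearity slides $\cB$-elements across the commas inside a single $R_l$; it does not rewrite a cumulant of the product $a_1a_2$ as a sum of cumulants of $(a_1,a_2)$, nor does it control the $\pi\in\NCP(n)$ whose blocks split a same-algebra run across several pieces. The recombination needed to identify $\sum_{\pi\in\NCP(n)}\tilde R_\pi(a_1,\dots,a_n)$ with $\sum_{\sigma\in\NCP(r)}\tilde R_\sigma(A^{(1)},\dots,A^{(r)})$ after merging maximal runs is, once again, the products formula. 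Until that is stated and applied, the claim that $\tilde R$ satisfies the moment--cumulant identity on mixed tuples is unsupported, so the uniqueness conclusion does not follow. For reference, the proof in [Sp98] avoids introducing $\tilde R$: after centering, reduce to alternating tuples (the same product lemma is used, so it cannot be avoided), observe that freeness gives $\FF(\bar a_1\cdots\bar a_n)=0$, and then argue by induction that for every $\pi\neq\hat 1_n$ either a singleton block (killed by centering) or an interval block of size $\geq 2$ (a mixed cumulant of lower order) annihilates $R_\pi$, forcing $R_n=0$ as well.
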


Other important combinatorial tools (which in particular are used to re-derive the formulas for the DE's from FDE's) are the formulas for computing cumulants of products (see \cite{KrSp00}, \cite{Sp00}) and the characterizations of freeness at different levels (\cite{NSS02}) in terms of cumulants.

\subsection{The analytic subordination phenomena}
Like in the scalar case, there are analytical tools to compute operator-valued free convolutions, which are based on the $\mathcal{B}$-valued Cauchy-transform $$G_x^{\mathcal{B}}(b)=\FF((b-x)^{-1}),$$ which maps the operatorial upper half-plane $$\mathbb{H}^+(\mathcal{B}):=\{b\in\mathcal{B}:\exists \ee> 0 \text{ such that }-i(b-b^*)\geq \ee\cdot 1\}$$ into the lower half-plane $\mathbb{H}^-(\mathcal{B})=-\mathbb{H}^+(\mathcal{B})$. 
In the usual settings coming from random matrix models (as we have seen above), our probability space $\cA$ may have several operator-valued structures $\FF_i:\cA\to\cB_i$ simultaneously, with $\CC=\cB_1\subset\cB_2\subset\dots\subset\cB_k$, and $\FF_i\circ \FF_{i+1}=\FF_i$. 
We are usually interested ultimately in the scalar-valued distribution, which can be obtained (via Stieltjes inversion) from the Cauchy-transform. 
The later in turn can be obtained from any ''upper'' $\mathcal{B}_i$-valued Cauchy transform, as we have that, for all $b\in\cB_i$ $$\FF_i(G_x^{\mathcal{B}_{i+1}}(b))=\FF_i\circ \FF_{i+1}((b-x)^{-1})=\FF_i((b-x)^{-1})=G^{\cB_i}_x(b).$$

A drawback of the operator-valued setting is that, unless we ask $\mathcal{B}$ to be commutative, one can hardly compute explicit distributions: 
although $\mathcal{B}$-valued generalizations of the $R$ and $S$-transforms exist (\cite{Vo95}, \cite{Dy06}), the task of explicitly inverting these operator-valued analytic maps is nearly impossible for any non-trivial situation (even for finite dimensional, relatively simple sub-algebras, like $\cB=M_2(\mathbb{C})$).

In terms of moments, the operator-valued Cauchy transform is given by $$G_x^{\mathcal{B}}(b)=\FF((b-x)^{-1})=\sum_{n\geq 0}\FF(b^{-1}(xb^{-1})^n)$$

The {operator-valued $\mathcal{R}$-transform} is defined by
\begin{equation*}
\mathcal{R}_{x}^{\mathcal{B}}\left(b\right)=\sum_{n\geq1}R_{n}^{\mathcal{B}}\left(x,bx,\dots,bx\right). 
\end{equation*}
The vanishing of mixed cumulants for free variables implies the additivity of the cumulants, and thus also the additivity of the $\cR$-transforms \cite{Vo95}: If $a_1$ and $a_2$ are $\cB$-free then we have for $b\in \cB$ that 
$\cR_{a_1+a_2}(b)=\cR_{a_1}(b)+\cR_{a_2}(b)$.

These transforms satisfy the functional equation
\begin{equation}
G_{a}^{\mathcal{B}}\left(b\right)=\left(\mathcal{R}_{a}^{\mathcal{B}}\left(G_{a}^{\mathcal{B}}\left(b\right)\right)-b\right)^{-1} \label{GR},
\end{equation}
which was crucial in \cite{SpVa12} to derive the Equations \ref{CoDeGau} and hence show the correspondence between DE's and FDE's.

Rather than using directly the $\mathcal{R}$-transform, a very powerful method to obtain $\mathcal{B}$-valued free convolutions is based on the analytic subordination phenomena observed by Biane (\cite{Bi98}, see also \cite{Vo02}). In particular, the approach of \cite{BeBe07} to obtain the subordination functions by iterating analytic maps can be very efficiently performed in the $\mathcal{B}$-valued context.

\begin{theorem}\cite{BMS13} \label{baddconv}
Let $(\cA,\FF)$ be a $\cB$-valued $C^*$-probability space and let $x,y\in\mathcal{A}$ be self-adjoint, $\mathcal{B}$-free. 
There exist an analytic map $\omega:\mathbb{H}^+(\mathcal{B})\to \mathbb{H}^+(\mathcal{B})$ such that $G_x(\omega(b))=G_{x+y}(b)$. 
Furthermore, for any $b\in\mathbb{H}^+(\mathcal{B})$ the subordination function $\omega(b)$ satisfies $$\omega(b)=\lim_{n\to\infty}f^{\circ n}_b(w),$$ where, for any $b,w\in \mathbb{H}^+(\mathcal{B})$, $f_b(w)=h_y(h_x(w)+b)+b$ and $h$ is the auxiliary analytic self-map $h_x(b)=(E((b-x)^{-1}))^{-1}-b$ on $\mathbb{H}^+(\mathcal{B})$. 
\end{theorem}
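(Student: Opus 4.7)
The strategy is the operator-valued adaptation of the Belinschi--Bercovici fixed-point principle for subordination: set up an analytic self-map of $\HH^+(\cB)$ whose fixed point is forced, via the additivity of the $\cR$-transform, to be the subordination function of $x$ relative to $x+y$; prove convergence of its iterates by the Earle--Hamilton theorem; then verify the subordination identity using \eqref{GR}.

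\textbf{Step 1: $h_x, h_y$ and $f_b$ are analytic self-maps of $\HH^+(\cB)$.} For $a=a^*\in\cA$, an operator-valued Nevanlinna--Herglotz-type argument applied to $G_a(b)=\FF((b-a)^{-1})$ shows that $G_a$ sends $\HH^+(\cB)$ analytically into $\HH^-(\cB)$ and that $h_a(b)=G_a(b)^{-1}-b$ satisfies $\Im h_a(b)\geq 0$. Therefore $f_b(w)=h_y(h_x(w)+b)+b$ is an analytic self-map of $\HH^+(\cB)$ satisfying $\Im f_b(w)\geq\Im b$, so for $b$ with $\Im b\geq\varepsilon\cdot 1$ the iterates remain in the closed sub-domain $\Omega_b:=\{w\in\cB:\Im w\geq\Im b\}$ after one step.

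\textbf{Step 2: Fixed-point reformulation of subordination.} Suppose the two subordination functions $\omega,\tilde\omega$ existed, i.e.\ $G_x(\omega(b))=G_y(\tilde\omega(b))=G_{x+y}(b)=:w$. From \eqref{GR} applied to $x$, $y$, and $x+y$, together with the additivity $\mathcal{R}_{x+y}=\mathcal{R}_x+\mathcal{R}_y$ — which is a direct consequence of $\cB$-freeness via vanishing of mixed $\cB$-valued free cumulants — a short algebraic manipulation gives $\omega+\tilde\omega-b=w^{-1}$, and hence the twin identities $\omega=h_y(\tilde\omega)+b$ and $\tilde\omega=h_x(\omega)+b$. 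Substituting the second into the first yields $\omega=f_b(\omega)$. Thus constructing $\omega$ is reduced to producing a fixed point of $f_b$.

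\textbf{Step 3: Convergence of the iterates (main obstacle).} This is the hard operator-valued step. The generalized upper half-plane $\HH^+(\cB)$ is a domain in the complex Banach space $\cB$; it carries a Carath\'eodory/Kobayashi hyperbolic pseudometric with respect to which every holomorphic self-map of $\HH^+(\cB)$ is non-expansive. One must show that $f_b$ is actually a strict contraction on $\Omega_b$: the containment $f_b(\Omega_b)\subseteq\{\Im w\geq\Im b\}$, combined with the explicit form of $h_x,h_y$, pushes the image uniformly away from $\partial\HH^+(\cB)$, and the Earle--Hamilton fixed-point theorem (the holomorphic analogue of Banach's contraction principle for self-maps of bounded domains in complex Banach spaces) then supplies a unique attractive fixed point $\omega(b)\in\Omega_b$ together with the convergence $f_b^{\circ n}(w)\to\omega(b)$ for every starting point $w\in\HH^+(\cB)$, uniformly on compacts. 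Uniform-on-compacts convergence of holomorphic maps preserves analyticity, so $\omega$ is automatically analytic in $b$. The main obstacle is precisely the verification of strict contractivity in the operator-valued setting; here one has to use that the range of $h_y$ avoids a uniform neighbourhood of the boundary, rather than any scalar-valued Schwarz--Pick estimate.

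\textbf{Step 4: Verification of $G_x(\omega(b))=G_{x+y}(b)$.} Given $\omega=f_b(\omega)$, set $\tilde\omega:=h_x(\omega)+b$, so that $\omega=h_y(\tilde\omega)+b$. Unpacking the definition of $h$ in both identities gives $G_x(\omega)^{-1}-\omega=\tilde\omega-b$ and $G_y(\tilde\omega)^{-1}-\tilde\omega=\omega-b$; subtracting shows $G_x(\omega)=G_y(\tilde\omega)=:w$, while either identity shows $\omega+\tilde\omega-b=w^{-1}$. Substituting $\omega=\mathcal{R}_x(w)+w^{-1}$ and $\tilde\omega=\mathcal{R}_y(w)+w^{-1}$ from \eqref{GR} and using the additivity of the $\cR$-transform yields $b=\mathcal{R}_x(w)+\mathcal{R}_y(w)+w^{-1}=\mathcal{R}_{x+y}(w)+w^{-1}$, i.e.\ $w=G_{x+y}(b)$. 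This is the subordination identity $G_x(\omega(b))=G_{x+y}(b)$, completing the plan.
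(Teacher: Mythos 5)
The paper does not prove this statement: it is imported verbatim from \cite{BMS13} and used as a black box in the later algorithm, so there is no in-paper proof to compare your argument against. That said, your sketch is a reasonable reconstruction of the Belinschi--Mai--Speicher approach (itself built on Belinschi--Bercovici), and Steps~1, 2 and~4 are in the right shape: the fixed-point reformulation $\omega=f_b(\omega)$ and the algebraic verification that a fixed point yields the subordination identity are standard.

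There are two places where the argument as written would not close. In Step~3, the Earle--Hamilton theorem applies to holomorphic self-maps of a \emph{bounded} domain whose image lies a positive distance from the boundary; the set $\Omega_b=\{w:\Im w\geq \Im b\}$ you work on is unbounded, so ``$\Im f_b(w)\geq\Im b$'' alone does not put you in the Earle--Hamilton situation. The missing ingredient (and the actual content of the BMS13 lemma) is a norm bound on $h_y$: for $\Im w\geq\varepsilon$ one has $\|h_y(w)\|\leq C(\|y\|,\varepsilon)$, so the range of $f_b$ is contained in a bounded set; intersecting with $\{\Im w>\Im b/2\}$ gives a bounded domain mapped strictly inside itself, and only then does Earle--Hamilton apply. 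In Step~4, the passage from $w=(b-\mathcal{R}_{x+y}(w))^{-1}$ to $w=G_{x+y}(b)$ needs uniqueness of the solution of \eqref{GR}, and more seriously the substitutions $\omega=\mathcal{R}_x(w)+w^{-1}$ require $\mathcal{R}_x$ to be defined at $w=G_{x+y}(b)$, which in the operator-valued setting is only guaranteed for $b$ large. This is precisely the reason the BMS13 proof is phrased directly in terms of $G$ and $h$ rather than through the $\mathcal{R}$-transform; it avoids the domain issues that your Step~4 silently assumes away. Rewriting Step~4 to argue only from the two identities $h_x(\omega)=\tilde\omega-b$, $h_y(\tilde\omega)=\omega-b$ and a separate freeness argument (or analytic continuation from large $\Im b$) would remove this dependence.
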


Numerically speaking, going from $h_x$ to $G_x$ and vice-versa is a simple operation. 
This means that one only needs the individual $\mathcal{B}$-valued Cauchy transforms of $x,y$ (or good approximations of these) to obtain the $\mathcal{B}$-valued Cauchy transform of $x+y$, and hence, its probability distribution. 
The operator-valued multiplicative convolution can also be numerically approximated (see \cite{BSTV14}).

In order to implement our main algorithm, we will be interested in the situation described in Prop. \ref{matrixfreeness} where $(\cA,\FF)$ is a rectangular probability space and hence our main space $(M_n(\CC)\otimes\cA,id\otimes \FF)$ consists of $n\times n$ matrices with entries in $\cA$, endowed with the entry-wise evaluation of $\FF:\cA\to \cB$. In Section \ref{SecLinTri} we will use Anderson's self-adjoint linearization trick to obtain the distribution of a polynomial on $\cB$-free variables (such as $\tilde R$ and $\tilde T$, which form $Q_{\infty}$) from the $(M_n(\CC)\otimes \cB)$-distribution of a specially constructed operator, which depends linearly on the inputs of the polynomial. 

In the next section we show how to obtain the $M_n(\cB)$-valued Cauchy-transforms of such linear elements.

\subsection{Linear elements}\label{Linear}

In a scalar-valued non-commutative probability space $(\cA,\tau)$, we have the integral representation of the Cauchy-transform: 
$$G_x(z)=\tau((z-x)^{-1})=\int_{\RR}(z-t)^{-1}\mathrm{d}\mu_x(t).$$
Analogously, for linear, self-adjoint elements $D\otimes x$ in a $M_n(\CC)$-valued probability space $(M_n(\CC)\otimes(\cA),id_m\otimes \tau)$, we have: 
$$G_{D\otimes x}(b)=(id_m\otimes \tau)((b-D\otimes x)^{-1})=\int_{\RR}(b-D\otimes t)^{-1}\mathrm{d}\mu_x(t).$$
The previous integrals can be approximated, for example, by using matrix-valued Riemann sums. In particular, we are able to approximate the $M_n(\CC)$-valued Cauchy transform of any self-adjoint matrix which depends linearly on a semicircular element $s$ (and hence also for a circular element $c$ which can be viewed as $s_1+is_2$, for free semi-circulars $s_1,s_2$). The same can be done if we start with a rectangular probability space.

Let $(\cA, \FF)$ be a $\langle p_1,\dots p_k\rangle$-rectangular probability space and consider the $\cB$-valued probability space $( M_m(\CC) \otimes\cA, \FF_2)$, where $\FF_2= id_m \otimes \FF$ and $\cB=(M_m(\CC)\otimes \langle p_1,\dots p_k\rangle)$ 

Consider $x\in \cA$ of the form $x=\alpha_1 p_1s_1p_1+\dots+\alpha_k p_ks_kp_k$, where $s_i=p_is_ip_i$ is a semicircular element when restricted to $\cA^{(i)}$. Let $D\in M_m(\CC)$, and let $b=(b_{ij})_{i,j\leq m}\in \cB$, with $b_{ij}=\beta^{ij}_1p_1+\dots+\beta^{ij}_kp_k$. Then we have
\begin{eqnarray*}
G^{\cB}_{D\otimes x}(b) &=&\FF_2((b-D\otimes x)^{-1})\\
&=&(id_m\otimes \FF )((b-D\otimes x)^{-1})\\
&=&\frac{1}{2\pi}\int_{-2}^2[((\beta^{ij}_1-D_{ij}\alpha_1t)p_1+\dots+(\beta^{ij}_k-D_{ij}\alpha_kt)p_k)_{ij}]^{-1}\sqrt{4-t^2}dt.
\end{eqnarray*}

The case of deterministic matrices is simpler. If we assume that $M_n(\CC)\subset\cA$ and consider $D=D^*\in M_m(\CC)\otimes M_n(\CC)$. Then $G^{\cB}_{D}(b)=G^{\cB}_{D}(b\otimes I_n)$ is just the partial trace $(id_m\otimes \FF_2)((b\otimes I_n-D)^{-1})$.

One should be able to provide a similar trick to approximate Cauchy transforms for elements of the form $D\otimes u+ D^*\otimes u^*$. For the moment, we find a way around this problem by removing Haar unitaries, as discussed in Section \ref{rotations}.

\section{The linearization trick and the main algorithm}\label{SecLinTri}

One of the main ingredients of our algorithm was already suggested by Voiculescu in his earlier papers on operator-valued free probability: 
the possibility to transfer questions about the distribution of a polynomial in non-commutative random variables to a question about the matrix-valued distribution of a related polynomial with matrix-valued coefficients, with the advantage of being linear on the non-commutative variables.

The idea was formalized and put into practice by Haagerup and Thorbjornsen \cite{HaTh05}. Some years later, Anderson \cite{An11} found linearizations which preserve self-adjointness properties, based on Schur complements. 
In the next section we generalize Anderson's self-adjoint linearization trick to be able to deal with operator-valued situations. 

Our machinery to deal with matricial and rectangular distributions is very well behaved with respect to the different elements of the numerical algorithm, developed in \cite{BMS13}, to compute distributions of self-adjoint polynomials on free self-adjoint random variables. For this reason, we will only point out those few steps where our situation differs.

Later, we describe the FDE's for the models in \cite{CoDe11} and suggest some numerically efficient linearizations.

\begin{prop}
Let $(\cA,\cB)$ be a $\cB$-probability space and let $x_1,\dots,x_{n}\in \cA$.
Let $P=P(x_1,\dots,x_{n})\in \cB\langle x_1,\dots x_n,x_1^*,\dots x_n^* \rangle$ be a self-adjoint $\cB$-valued polynomial in $x_1,\dots,x_{n}$ and their adjoints.
There exist $m\geq 1$ and an element $L_P\in M_m(\CC)\otimes\cA$ such that:
\begin{enumerate}
\item $L_P=c_1\otimes x_1+c_1^*\otimes x_1^*+\dots c_{n}\otimes x_{n}+c_{n}^*\otimes x_{n}^*+c \in M_m(\CC)\otimes\cA$, with $c\in M_m(\CC)\otimes \cB$ and, for $i\geq 1$ $c_i\in M_m(\CC)$.
\item If $\Lambda_{\varepsilon}(b))=diag(b,i\varepsilon,i\varepsilon,\dots,i\varepsilon)\in M_m(\CC)\otimes \cB$, then $$G^{\cB}_{P}(b)=\lim_{\varepsilon\downarrow0}(G^{M_m(\CC)\otimes \cB)}_{L_P}(\Lambda_{\varepsilon}(b)))_{11}.$$

\end{enumerate} 
\end{prop}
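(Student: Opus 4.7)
The plan is to adapt Anderson's self-adjoint linearization trick \cite{An11} from the scalar to the $\mathcal{B}$-valued setting. The key observation is that all the constructions involved---the Higman-type linearization of monomials, block-diagonal concatenation for sums, and Schur-complement symmetrization---are purely algebraic and remain valid over any unital $*$-algebra of coefficients. Since the $x_i$'s are the only non-$\mathcal{B}$ elements appearing in $P$, the $\mathcal{B}$-coefficients of $P$ will all land in the constant matrix $c\in M_m(\mathbb{C})\otimes\mathcal{B}$, while the coefficient matrices $c_i$ multiplying the variables automatically have scalar entries.

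First I would build a (not necessarily self-adjoint) linearization $\hat L=\bigl(\begin{smallmatrix} 0 & u \\ v & Q \end{smallmatrix}\bigr)$ by induction on the polynomial structure. For a single $\mathcal{B}$-monomial $b_0 x_{i_1}^{\alpha_1}b_1\cdots x_{i_k}^{\alpha_k}b_k$, the Higman trick produces such an $\hat L$ of size $k+1$ whose $Q$-block is upper-triangular with diagonal entries $-1$ (hence invertible) and whose $(1,1)$-Schur complement $-uQ^{-1}v$ equals the given monomial. Sums of monomials are linearized by block-diagonal concatenation with identified $(1,1)$-corners. Exploiting $P=P^{*}$, I then symmetrize by passing to
\[
L_{P}=\frac{1}{2}\begin{pmatrix} 0 & u & v^{*} \\ u^{*} & 0 & Q^{*} \\ v & Q & 0 \end{pmatrix},
\]
which is manifestly self-adjoint, whose lower block is anti-diagonal in $Q, Q^{*}$ and thus invertible, and whose $(1,1)$-Schur complement works out to $\tfrac{1}{2}(P+P^{*})=P$ by a direct block computation. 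Collecting terms linear in each $x_i$ and $x_i^{*}$ identifies the scalar coefficient matrices $c_i\in M_m(\mathbb{C})$ and the constant part $c\in M_m(\mathbb{C})\otimes\mathcal{B}$, verifying item (1).

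For item (2), since $L_{P}=L_{P}^{*}$ and the imaginary part of $\Lambda_{\varepsilon}(b)$ is bounded below by a positive multiple of the identity for every $b\in\mathbb{H}^{+}(\mathcal{B})$ and $\varepsilon>0$, the element $\Lambda_{\varepsilon}(b)-L_{P}\in M_m(\mathbb{C})\otimes\mathcal{A}$ is invertible, so $G^{M_m(\mathbb{C})\otimes\mathcal{B}}_{L_{P}}(\Lambda_{\varepsilon}(b))$ is well defined. Block-inverting via the Schur-complement formula yields
\[
\bigl[(\Lambda_{\varepsilon}(b)-L_{P})^{-1}\bigr]_{11}=(b-S_{\varepsilon})^{-1},
\]
where $S_{\varepsilon}$ is a Schur expression built from the lower blocks of $L_{P}$ together with the $i\varepsilon$ shifts; invertibility of the lower block (which is stable as $\varepsilon\downarrow 0$) guarantees $S_{\varepsilon}\to P$ in norm. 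Applying $\id_m\otimes\mathbf{F}$ and reading off the $(1,1)$-entry gives the claimed identity. The main obstacle is verifying that the symmetrization step simultaneously preserves linearity in the variables and the invertibility of the lower block; this is a bookkeeping exercise already performed in \cite{An11,HaTh05} for scalar coefficients and transferring verbatim here, since the relevant Schur-complement identities are purely algebraic and do not depend on commutativity of the coefficient ring.
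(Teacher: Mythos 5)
Your proof is correct, but it takes a noticeably different route from the paper's. The paper's proof is essentially a reduction: it views $P\in\cB\langle x_1,\dots,x_n,x_1^*,\dots,x_n^*\rangle$ as a scalar polynomial in the enlarged variable set $x_1,\dots,x_{2n},b_1,\dots,b_s$ (where $x_{n+j}=x_j^*$ and the $b_i$'s are the finitely many elements of $\cB$ actually appearing as coefficients), and then invokes the existing scalar results of \cite{BMS13} (Prop.~3.2, Cor.~3.3, Prop.~3.4, Cor.~3.5) off the shelf; self-adjointness of $L_P$ then automatically forces $d_i=c_i^*$ and sends the $b_i$-part into the constant $c\in M_m(\CC)\otimes\cB$. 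You instead re-derive Anderson's linearization directly in the $\cB$-valued setting, observing (correctly) that the Higman construction for monomials, block-concatenation for sums, and Schur-complement symmetrization use no commutativity of the coefficient ring. Both routes give the same outcome. One small deviation from the paper's bookkeeping: you symmetrize a linearization of $P$ itself and rescale by $\tfrac12$, whereas the paper first writes $P=q+q^*$ with $q=P/2$ and symmetrizes a linearization of $q$; these differ only in how the factor of two is distributed and produce equivalent $L_P$'s (your rescaling halves all blocks, including the $-1$'s in $Q$, which is harmless but means your $Q$ and the paper's are not literally identical). Your treatment of item (2) is also slightly more explicit: you track the full Schur expression $S_\varepsilon$ and argue $S_\varepsilon\to P$ in norm as $\varepsilon\downarrow0$ (using stability of invertibility of the lower block) before applying $\id_m\otimes\FF$, while the paper states the exact resolvent identity at $\varepsilon=0$ from \cite{BMS13} Cor.~3.3 and then introduces the $i\varepsilon$-regularization purely so that the argument lives in $\HH^+(M_m(\cB))$ for later use of Theorem~\ref{baddconv}. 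Both arguments close the same way. The net comparison: the paper's reduction is leaner (it offloads all algebra to a citation), while yours is self-contained at the cost of repeating the algebraic verification.
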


\begin{proof}
The main idea is to think of the polynomial $P\in \cB\langle x_1,\dots,x_n,x_1^*,\dots, x_n^*\rangle$ as a polynomial $P\in \CC\langle x_1,\dots,x_n,x_{n+1},\dots, x_{2n},b_1,\dots,b_s\rangle$, where $x_{n+j}=x_j^*$ and the $b_i$'s are the elements of $\cB$ which actually appear as coefficients in $P$. With this, we are able to use [\cite{BMS13}, Prop. 3.2, Cor. 3.3 and Prop. 3.4].

Note that, by proceeding as in [\cite{BMS13}, Cor. 3.5], we will also get a self-adjoint linearization $$L_P=c_1\otimes x_1+\dots +c_{n}\otimes x_{n}+d_1\otimes x_1^*+\dots+d_{n}\otimes x_{n}^*+e_1\otimes b_1+\dots+e_s\otimes b_s+f.$$

The fact that $L_P=L_P^*$ will mean of course that $d_i=c_i^*$ and $c^*=c:=e_1\otimes b_1+\dots+e_s\otimes b_s+f$. So our linearization has the desired form.

In view of [\cite{BMS13}, Cor. 3.3], one has again that $$(b-P)^{-1}=[(\Lambda_0(b)-L_P)^{-1}]_{11}$$ whenever $(b-P)$ (or, equivalently $\Lambda_0(b)-L_P$) is invertible. Hence, the linearization works actually at the level of resolvents and the translation to Cauchy-transforms is obtained by applying $id_m\otimes \FF$ to the resolvent of the right side (we must, however, consider $\Lambda_{\varepsilon}(b)$ as in [\cite{BMS13}, Cor. 3.6] so that the argument belongs to the operatorial upper-half-plane, which is the right domain of the Cauchy-transform for a later application of Theorem. \ref{baddconv}).  
\end{proof}

We include below the adaptations of [\cite{BMS13}, Prop 3.4 and Cor. 3.5] to our situation, which provide such linearizations.

\begin{remark} We recall one procedure to obtain a self-adjoint linearization.
A general monomial $P=b_0x_{i_1}b_1\cdots x_{i_k}b_k$ has a (possibly non-self-adjoint) linearization
$$L_P=\left[\begin{array}{cccccc}
& & & & & b_0\\
& & & & x_{i_1} & -1 \\
& & & b_1 & -1 & \\
& & \dots & \dots & & \\ 
& x_{i_k} & -1 & & & \\
b_k & -1 & & & &                         
\end{array}
 \right]$$

If $P=P_1+\dots+P_k$ and each $P_j$ has a linearization 
$$L_{P_j}=\left[\begin{array}{cc}
0 & u_j\\
v_j & Q_j                    
\end{array}
\right],$$
then a linearization of $P$ is given by 
$$L_P=
\left[\begin{array}{cccc}
0 & u_1 & \cdots  & u_k\\
v_1 & & &   \\
\vdots & & \ddots & \\
v_k & & & Q_k 
\end{array}
 \right].$$

Finally, if $P$ is self-adjoint, we may view it as $P=q+q^*$ for $q=P/2$. If 
$$L_{q}=\left[\begin{array}{cc}
0 & u\\
v & Q                    
\end{array}
\right]$$ is a linearization of $q$ then
$$L_{P}=\left[\begin{array}{ccc}
0 & u & v^*\\
u^* & 0 & Q^*\\
v & Q & 0                    
\end{array}
\right]$$ is a self-adjoint linearization of $P$.
\end{remark}

\begin{remark}
Since we are able to compute operator-valued Cauchy transforms of arbitrary deterministic matrices (as these are just partial traces), the products of deterministic matrices do not really bother us. We should use the linearization trick only to transform the polynomial into a polynomial with matrix coefficients which is linear in the variables which correspond to random matrices but needs not necessarily to be linear on the variables corresponding to deterministic matrices.
\end{remark}

\subsection{Examples from wireless communications}

Now we consider some matrix models from \cite{CoDe11}. Understanding these models and their deterministic equivalents was one of the main motivations of our work. After each model, we discuss embeddings of the matrices in rectangular spaces, we then discuss the FDE and we give a linearization which allows to plot the distribution.

\subsubsection{Unitary precoded channels \cite{CHD11}}\label{Exwire1}

For the model $$Q=\sum_{i=1}^k R_iU_iT_iU_i^*R_i^*,$$
we already discussed its embedding in a rectangular space and its FDE $Q_N=\tilde R \tilde T \tilde R ^*$, so we are only missing its linearization. It is very simple, namely
$$L_{P_{\infty}}=\left[\begin{array}{ccc}
0 & 0 & \tilde R\\
0 & \tilde T & -1 \\
 \tilde R^* & -1 & 0 
\end{array}
 \right],$$
where each entry is really an $\tilde N\times \tilde N$ block, with $\tilde N=N_0+n_1\dots+n_k$.
The individual $M_3(\cB)$-valued Cauchy transforms of the self-adjoint elements 
$$L_1=\left[\begin{array}{ccc}
0 & 0 & \tilde R\\
0 & 0 & -1 \\
\tilde R^* & -1 & 0 
\end{array}
 \right],\quad L_2=\left[\begin{array}{ccc}
0 & 0 &  0\\
0 & \tilde T & 0 \\
0 & 0 & 0 
\end{array}
 \right],$$ 
can be computed by performing partial traces, as explained in Section \ref{Linear}.

Fig. 2 in the Introduction shows the implementation of our algorithm for this case.

\subsubsection{Correlated MIMO multiple access channels \cite{CDS11}}\label{Exwire2}. 

Let us go back to the model
$$P=\sum_{i=1}^k R_iZ_iT_iZ_i^*R_i.$$
In order to achieve asymptotic freeness we embed the matrices in a rectangular space exactly as we did in the previous case. The FDE will be then  $$P_{\infty}=\sum_{i=1}^k R_i c_iT_ic_i^*R_i^*,$$ 
and the linearization yields
$$L_{P_{\infty}}=\left[\begin{array}{ccccc}
0 & 0 & 0 & 0 & R\\
0 & 0 & 0 & c & -1\\
0 & 0 & T & -1 & 0 \\
0 & c^* & -1 & 0 & 0 \\
R^* & -1 & 0 & 0 & 0 
\end{array}
 \right],$$
where each entry is again $\tilde N\times \tilde N$, with $\tilde N=N+n_1\dots+n_k$.
The individual $M_5(\cB)$-valued Cauchy-transforms of the self-adjoint elements 
$$L_1=\left[\begin{array}{ccccc}
0 & 0 & 0 & 0 & R\\
0 & 0 & 0 & 0 & -1\\
0 & 0 & T & -1 & 0 \\
0 & 0 & -1 & 0 & 0 \\
R^* & -1 & 0 & 0 & 0 
\end{array}
 \right],\quad L_2=\left[\begin{array}{ccccc}
0 & 0 & 0 & 0 & 0\\
0 & 0 & 0 & c & 0\\
0 & 0 & 0 & 0 & 0 \\
0 & c^* & 0 & 0 & 0 \\
0 & 0 & 0 & 0 & 0 
\end{array}
 \right],$$ 
can be computed by, respectively, performing a partial trace and approximating by matrix-valued Riemann sums (or, alternatively, by using the method in \cite{HRS07}), as explained in Section \ref{Linear}.

\subsubsection{Frequency selective MIMO systems \cite{DuLo07}} \label{Exwire3}
Let $k\geq1$ be fixed again and consider now the model
$$P_N=\sum_{i=1}^k (R_iZ_iT_i)\sum_{j=1}^k(T_j^*Z_j^*R_j^*),$$
where the individual matrices are as in the previous example and additionally $n_i=n$ for all $i\leq k$.

We embed again $R_i,Z_i$ in a rectangular space in such a way that $P_i R_i P_0= R_i$ and $P_i Z_i P_i= Z_i$, but this time we put $P_0  T_i P_i= T_i$.

If (as in the first example), we put again $R=\sum R_i$, $T=\sum T_i$, $c=\sum c_i$, our FDE can be compactly written as $$P_{\infty}=RcTT^*c^*R^*.$$
The linearization will be very similar to the one in the previous case, the main difference is the way in which we have embedded the matrices $T_i$. We get

$$L_{P_{\infty}}=\left[\begin{array}{ccccc}
0 & 0 & 0 & 0 & R\\
0 & 0 & 0 & c & -1\\
0 & 0 & TT^* & -1 & 0 \\
0 & c^* & -1 & 0 & 0 \\
R^* & -1 & 0 & 0 & 0 
\end{array}
 \right],$$
where each entry is $\tilde N \times \tilde N$, with $\tilde N=N+kn$.
The individual $M_5(\cD)$-valued Cauchy-transforms of the self-adjoint elements 
$$L_1=\left[\begin{array}{ccccc}
0 & 0 & 0 & 0 & R\\
0 & 0 & 0 & 0 & -1\\
0 & 0 & TT^* & -1 & 0 \\
0 & 0 & -1 & 0 & 0 \\
R^* & -1 & 0 & 0 & 0 
\end{array}
 \right],\quad L_2=\left[\begin{array}{ccccc}
0 & 0 & 0 & 0 & 0\\
0 & 0 & 0 & c & 0\\
0 & 0 & 0 & 0 & 0 \\
0 & c^* & 0 & 0 & 0 \\
0 & 0 & 0 & 0 & 0 
\end{array}
 \right],$$ 
can be again computed as explained in Section \ref{Linear}.
\bibliography{Doktorarbeit}
\bibliographystyle{amsplain}

\end{document}